\newtheorem{theorem}{Theorem}
\newtheorem{lemma}{Lemma}
\theoremstyle{remark}
\newtheorem{example}{Example}
\theoremstyle{definition}
\newtheorem{definition}{Definition}
\newcommand{\mc}{\mathcal}
\newcommand{\kv}{$^{\scriptscriptstyle\boxplus}$}
\newcommand{\cc}{\mathbb C}
\newcommand{\rr}{\mathbb R}
\newcommand{\un}{\underline}
\renewcommand{\l}{\langle}
\renewcommand{\r}{\rangle}
\renewcommand{\le}{\leqslant}
\renewcommand{\ge}{\geqslant}
\newcommand{\matt}[1]{\left[\begin{smallmatrix}
 #1 \end{smallmatrix}\right]}
\newcommand{\mat}[1]{\begin{bmatrix}
 #1  \end{bmatrix}}
\newcommand{\ma}[1]{\begin{matrix}
 #1  \end{matrix}}
\newcommand{\arr}[1]{\left[\begin{array}
 #1\end{array}\right]}
\newcommand{\arrr}[4]{\left[\begin{array}{c|c}
 #1&#2\\\hline #3&#4\end{array}\right]}
\begin{document}
\title{Operators on positive semidefinite inner product spaces\footnote{This is a preprint of the paper published in Linear Algebra Appl. 596 (2020) 82--105.}}

\author[bov]{Victor A.~Bovdi} \ead{vbovdi@gmail.com}
\address[bov]{United Arab Emirates University, Al Ain, UAE}

\author[kly]{Tetiana Klymchuk}
\ead{klimchuk.tanya@gmail.com}
\address[kly]{Universitat Polit\`{e}cnica de Catalunya, Barcelona, Spain}

\author[ser]{Tetiana Rybalkina} \ead{rybalkina\_t@ukr.net}

\author[bov]{Mohamed A.~Salim}
\ead{msalim@uaeu.ac.ae}

\author[ser]{Vladimir~V.~Sergeichuk\corref{cor}}
\ead{sergeich@imath.kiev.ua}
\address[ser]{Institute of Mathematics, Tereshchenkivska 3,
Kiev, Ukraine}

\cortext[cor]{Corresponding author.}

\begin{abstract}
Let $U$ be a semiunitary space; i.e., a complex vector space with scalar product given by a positive semidefinite Hermitian form $\l\cdot,\cdot\r$. If a linear operator  ${\cal A}: U\to U$ is bounded (i.e., $\|{\cal A}u\|\le c\|u\|$ for some $c\in\rr$ and all $u\in
U$), then the subspace $U_0:=\{u\in U\,|\,\l u,u\r=0\}$ is invariant, and so $\mc A$ defines the linear operators
$\mc A_0: U_0\to U_0$ and $\mc A_1: U/U_0\to U/U_0$.

Let $\cal A$ be an indecomposable bounded operator on $U$ such that $0\ne U_0\ne U$. Let $\lambda$ be an eigenvalue of ${\cal A}_0$.  We prove that the algebraic multiplicity of $\lambda $ in ${\cal A}_1$ is not less than the geometric multiplicity of $\lambda $ in ${\cal A}_0$, and   the geometric multiplicity
of $\lambda $ in ${\cal A}_1$  is not less than  the number of Jordan blocks
$J_t(\lambda ) $ of each fixed size $t\times t$ in the Jordan canonical form of ${\cal
A}_0$.

We give canonical forms of selfadjoint and isometric operators on $U$, and of Hermitian forms on $U$.

For an arbitrary system of semiunitary spaces and linear mappings on/between them,
we give an algorithm that reduces their matrices to canonical form. Its special cases are Belitskii's and Littlewood's algorithms for systems of linear operators on vector spaces and unitary spaces, respectively.
\end{abstract}

\begin{keyword}
Positive semidefinite inner product spaces; Bounded operators; Selfadjoint and isometric operators; Belitskii's and Littlewood's algorithms

\MSC 15A21; 15A42; 15A63; 47B50
\end{keyword}

\maketitle

\section{Introduction}

We study linear operators on  a complex vector space $U$ with scalar product given by a positive semidefinite Hermitian form $\l\cdot,\cdot\r:U\times U\to \cc$. We suppose that $\l\cdot,\cdot\r$ is semilinear in the first argument and linear in the second.

There exists a basis of $U$, in which the scalar product is given by the matrix
\begin{equation}\label{gru}
H_m:=\mat{I_m&0\\0&0};
\end{equation}
we call this basis \emph{$m$-orthonormal} or \emph{semiorthonormal}.
If $(\alpha _1,\dots, \alpha_n)$ and
$( \beta_1,\dots, \beta_n)$ are the coordinates of $u,v\in U$ with respect to an $m$-orthonormal basis $e_1,\dots,e_n$, then
$$ \l u,v\r=
\bar{\alpha}_1 \beta_1+\dots+ \bar{\alpha}_m \beta_m. $$

Bognar \cite[p.\,14]{bog} calls $U$ a  \emph{positive semidefinite inner product space}. We call $U$  a \emph{semiunitary space} for short and basing on the fact that  $U$ is a direct sum of a \emph{unitary space} (i.e., a complex vector space with positive definite Hermitian form)  and a complex vector space with zero scalar product.

In Section \ref{ddc} we formulate the main theorems. In Section \ref{vgw} we give some reduced form of a matrix of a bounded linear operator, which is used in the next sections.
In Section \ref{hhgy} we prove Theorem \ref{t2.2} about eigenvalues of bounded linear operators on semiunitary spaces. In Section \ref{v2v} we prove Theorem \ref{ttt} about canonical matrices of
seminormal operators (their special cases are selfadjoint operators and isometric operators) and Hermitian forms on semiunitary spaces. In Section \ref{z3v} we construct an algorithm for reducing to canonical form of matrices of a system of linear mappings on semiunitary spaces; this section can be read independently of the other sections.

Mehl and Rodman \cite{meh2} study selfadjoint linear operators on a complex vector space with singular Hermitian form; they give a canonical form of selfadjoint operators on semiunitary spaces, which we also give in Theorem \ref{ttt}(a$_1$).
Many researches study semidefinite subspaces that are invariant under a certain linear operator on an indefinite inner product space $U$. Their results and many applications are given in \cite{goh,meh1} if $U$ is nondegenerate, in \cite{meh} if $U$ is possibly degenerate, and in \cite{azi,bog} if $U$ is a Krein space.

All vector spaces that we consider are finite dimensional complex vector spaces and all matrices are complex matrices.

\section{Main results}\label{ddc}

\subsection{Eigenvalues of bounded operators}

The \emph{isotropic part} of a semiunitary space $U$ is the subspace
\begin{equation}\label{rbg}
U_0:=\{u\in U\,|\,\l u,u\r=0\}.
\end{equation}

The \emph{orthogonal direct sum} $V\obot W$  of semiunitary spaces $V$ and $W$ is the semiunitary space $V\oplus W$ with scalar product
\[
\l v+w,v'+w'\r:=\l v,v'\r+\l w,w'\r\qquad \text{for $v,v'\in V$ and $w,w'\in W$}. Аня
\]

The \emph{direct sum} of two linear operators $\mc B:V\to V$ and $\mc C:W\to W$ on semiunitary spaces is the linear operator
\begin{equation}\label{cer}
\mc B\oplus \mc C:V\obot W\to V\obot W, \qquad v+w\mapsto \mc Bv+ \mc C w.
\end{equation}
An operator $\mc A:U\to U$ is \emph{indecomposable} if $U$ cannot be decomposed into an orthogonal direct sum of two invariant subspaces of smaller dimensions.

It follows from \cite[Theorem 2]{ser_izv} that each operator $\mc A:U\to U$ is decomposed into a direct sum of indecomposable operators
\begin{equation}\label{ine}
\mc A_1\oplus\dots\oplus \mc A_t:U_1\obot\dots\obot U_t\to U_1\obot\dots\obot U_t;
\end{equation}
this sum is determined uniquely, up to permutation of summands and replacement of each summand
$\mc A_i:U_i\to U_i$ by $\mc B_i:V_i\to V_i$ such that there is a linear bijection $U_i\to V_i$ that preserves the scalar products and transforms $\mc A_i$ to $\mc B_i$.

The \emph{length} of $u\in U$ is the number
$
\|u\|:=\sqrt{\l u,u\r}.
$
A linear operator  ${\cal
A}: U\to U$ is {\it bounded} if  there exists a positive $c\in\rr$ such that
\begin{equation}\label{abe}
\|{\cal A}u\|\le c\|u\|\qquad\text{for all }u\in
U.
\end{equation}

\begin{lemma} \label{nlp}
The following statements are equivalent for every operator ${\cal A}$ on a semiunitary space $U$:
\begin{itemize}
  \item[{\rm(a)}] The operator $\cal A$ is bounded.

  \item[{\rm(b)}] The subspace $U_0$ defined in \eqref{rbg} is invariant.

  \item[{\rm(c)}] The matrix of $\mc A$ in each $m$-orthonormal basis has the lower block triangular form
\begin{equation}\label{sbn}
A=\mat{B&0\\C&D},\quad
\text{in which $B$ is }m\times m.
\end{equation}
\end{itemize}
\end{lemma}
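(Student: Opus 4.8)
The plan is to fix an $m$-orthonormal basis $e_1,\dots,e_n$ of $U$ and translate all three conditions into statements about the coordinate matrix $A$ of $\mc A$ in this basis. The key observation is that, for $u$ with coordinates $(\alpha_1,\dots,\alpha_n)$, one has $\l u,u\r=|\alpha_1|^2+\dots+|\alpha_m|^2$, so that $\|u\|$ is exactly the ordinary Euclidean length of the vector formed by the first $m$ coordinates of $u$. In particular the isotropic part \eqref{rbg} is precisely $U_0=\l e_{m+1},\dots,e_n\r$. With this dictionary in hand I would close the logical cycle (a)$\Rightarrow$(b)$\Rightarrow$(c)$\Rightarrow$(a).

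For (a)$\Rightarrow$(b): if $u\in U_0$ then $\|u\|=0$, so boundedness \eqref{abe} gives $\|\mc Au\|\le c\|u\|=0$, whence $\mc Au\in U_0$; thus $U_0$ is invariant. For (b)$\Rightarrow$(c): invariance of $U_0=\l e_{m+1},\dots,e_n\r$ means each $\mc Ae_j$ with $j>m$ lies in $U_0$, i.e.\ the first $m$ coordinates of $\mc Ae_j$ vanish. These coordinates are exactly the entries of the top-right $m\times(n-m)$ block of $A$, so that block is zero and $A$ acquires the form \eqref{sbn}.

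For (c)$\Rightarrow$(a): write the coordinate vector of an arbitrary $u$ as $\mat{y\\x}$ with $y\in\cc^m$, so that $\|u\|=\|y\|_2$. The block form \eqref{sbn} gives the first $m$ coordinates of $\mc Au$ equal to $By$, and therefore $\|\mc Au\|=\|By\|_2\le\|B\|_2\,\|y\|_2=\|B\|_2\,\|u\|$, where $\|B\|_2$ is the spectral norm of the $m\times m$ block $B$. Taking $c$ to be any positive number with $c\ge\|B\|_2$ verifies \eqref{abe}, so $\mc A$ is bounded.

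The argument is essentially bookkeeping once the coordinate description of $U_0$ and of $\|\cdot\|$ is fixed, and I expect no serious obstacle. The one point meriting care is (c)$\Rightarrow$(a): since $\|\cdot\|$ sees only the first $m$ coordinates, the estimate collapses to the elementary operator-norm inequality $\|By\|_2\le\|B\|_2\,\|y\|_2$, and the blocks $C$ and $D$ are irrelevant. This is also the conceptual heart of the lemma: boundedness constrains only the induced action on the unitary quotient $U/U_0$, which is precisely why it is the top-right block — the part sending $U_0$ into the complement of $U_0$ — that is forced to vanish.
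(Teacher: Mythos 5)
Your proof is correct and follows essentially the same route as the paper: the same cycle (a)$\Rightarrow$(b)$\Rightarrow$(c)$\Rightarrow$(a), with the coordinate description of $U_0$ and $\|\cdot\|$ in an $m$-orthonormal basis, and the spectral-norm bound $\|By\|_2\le\|B\|_2\|y\|_2$ for (c)$\Rightarrow$(a), which the paper obtains by citing \cite[Example 5.6.6]{hor}. The only difference is that you spell out (b)$\Rightarrow$(c), which the paper dismisses as obvious.
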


\begin{proof}
(a)$\,\Rightarrow\,$(b) \ Let the operator $\cal A$ be bounded, and let $u\in U_0$. By \eqref{abe},  $\|\mc A u\|\le c\|u\|=c0=0$. Hence $\mc A u\in U_0$.

(b)$\,\Rightarrow\,$(c) \ This statement is obvious.

(c)$\,\Rightarrow\,$(a) \  Let \eqref{sbn} be the matrix of $\mc A$ in an $m$-orthonormal basis, let  $[\alpha _1\,\dots\,\alpha_n]^T$ be the coordinate vector of $u\in U$, and let $a:=[\alpha _1\,\dots\,\alpha_m]^T$.  Then
$\|u\|=|a|:=\sqrt{\bar \alpha _1\alpha _1+\dots+\bar \alpha _m\alpha _m}$  and $\|\mc Au\| =|Ba|$. By \cite[Example 5.6.6]{hor},
 $|Ba| \le c|a|$, in which $c\in\rr$ is
the spectral norm of $B$, and so $\|\mc Au\|\le c\|u\|$.
\end{proof}

If $\mc A: U\to U$ is a bounded linear operator, then $U_0$ is an invariant subspace, and so
$\mc A$ defines the linear operators
\begin{equation}\label{asz}
\mc A_0: U_0\to U_0,\qquad \mc A_1: U/U_0\to U/U_0,
\end{equation}
where $\mc A_0$ is the restriction of $\mc A$ on $U_0$ and $\mc A_1(u+U_0):=A_1u+U_0$.
If \eqref{sbn} is the matrix of $\mc A$ in an $m$-orthonormal basis, then $B$ and $D$ are the matrices of $\mc A_1$ and $\mc A_0$.

Recall that the \emph{algebraic multiplicity} of an eigenvalue $\lambda $ of a linear operator $\mc A$ is
the multiplicity of $\lambda $ as a root of the characteristic polynomial of $\mc A$. The \emph{geometric multiplicity} of  $\lambda $ is
the number of Jordan blocks with eigenvalue $\lambda $  in the Jordan canonical form of  $\mc A$.

The following theorem is proved in Section \ref{hhgy}.

\begin{theorem} \label{t2.2}
Let $\cal A$ be an indecomposable bounded
operator on a semiunitary space $U$. Let ${\cal A}_0$ and ${\cal A}_1$ be the operators defined in \eqref{asz}. Let $0\ne U_0\ne U$ (if $U_0=0$ or $U_0=U$, then $U$ is a unitary space or a vector space, respectively).
If $\lambda $ is an eigenvalue  of ${\cal A}_0$, then
\begin{itemize}
  \item[\rm(a)]
$\lambda $ is  an eigenvalue of ${\cal A}_1$,

  \item[\rm(b)]  the algebraic multiplicity of $\lambda $ in ${\cal A}_1$ is greater than or equal to the geometric multiplicity of $\lambda $ in ${\cal A}_0$, and

  \item[\rm(c)] the geometric multiplicity
of $\lambda $ in ${\cal A}_1$ is greater than or equal to the number of Jordan blocks
$J_k(\lambda ) $ of each fixed size $k\times k$ in the Jordan canonical form of ${\cal A}_0$.
\end{itemize}
\end{theorem}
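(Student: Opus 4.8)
The plan is to realize $\mc A$ by the matrix $A=\mat{B&0\\C&D}$ of Lemma \ref{nlp}, where $B$ is the matrix of $\mc A_1$ and $D$ the matrix of $\mc A_0$ (see \eqref{sbn}, \eqref{asz}), and first to describe the admissible changes of $m$-orthonormal basis. Such a change is a matrix $S$ with $S^*H_mS=H_m$, and a short computation forces $S=\mat{P&0\\R&Q}$ with $P$ unitary of size $m$, $Q$ invertible, and $R$ arbitrary. Passing to $S^{-1}AS$ therefore produces exactly three elementary moves: the unitary similarity $B\mapsto P^{-1}BP$; the arbitrary similarity $D\mapsto Q^{-1}DQ$; and, with $P=Q=I$, the Sylvester move $C\mapsto C+DR-RB$ for arbitrary $R$. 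Indecomposability of $\mc A$ says precisely that no combination of these moves splits $A$ into a direct sum respecting the splitting of coordinates into the unitary part and $U_0$; in particular, any invariant subspace of $U_0$ that can be decoupled from the rest yields a forbidden orthogonal summand, since every subspace of $U_0$ is isotropic and hence orthogonal to all of $U$.

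First I would bring $D$ to Jordan form and use the Sylvester move to erase from $C$ everything in the image of $\mc S\colon R\mapsto DR-RB$; this is the reduced form of Section \ref{vgw}. The computation I expect to need is the cokernel of $\mc S$: for a pair of blocks $J_p(\lambda)$ of $D$ and $J_q(\lambda)$ of $B$ sharing an eigenvalue $\lambda$ it has dimension $\min(p,q)$ and is represented by entries of $C$ in the row of the top generator of the $D$-block (coupling it to the lowest generators of the $B$-block); for blocks with different eigenvalues $\mc S$ is invertible and the corresponding part of $C$ disappears. Statement (a) drops out at once: if $\lambda$ were an eigenvalue of $\mc A_0$ but not of $\mc A_1$, then the $\lambda$-part $D_\lambda$ of $D$ has spectrum disjoint from $B$, so the equation $D_\lambda R-RB=-C_\lambda$ is solvable and kills the coupling of the $\lambda$-generalized eigenspace of $D$; that eigenspace then splits off as a nonzero orthogonal (isotropic) summand, and its complement is nonzero because $0\ne U_0\ne U$ --- contradiction.

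For (b) and (c) I would argue by contrapositive, manufacturing a forbidden decomposition whenever the inequality fails. The governing object is the ``primary coupling'' map $\Theta\colon\ker(\mc A_1-\lambda)\to\cc^{\,g_0}$, where $g_0$ is the geometric multiplicity of $\lambda$ in $\mc A_0$, sending an eigenvector of $B$ to the tuple of its top-generator components in $C$; its kernel consists of the eigenvectors of $\mc A_1$ that lift to eigenvectors of $\mc A$. For (c) fix a size $k$. The $d_k$ rows of $\Theta$ attached to the blocks $J_k(\lambda)$ of $D$ are vectors in $\cc^{\,g_1}$, $g_1$ being the geometric multiplicity of $\lambda$ in $\mc A_1$; if $d_k>g_1$ they are dependent, and as these $D$-blocks have equal size they may be recombined by the centralizer of $D$ so as to annihilate the entire coupling of one of them, splitting it off and contradicting indecomposability. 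For (b) the same coupling, read on the full generalized $\lambda$-eigenspace of $\mc A_1$ (of dimension the algebraic multiplicity $a_1$) and landing in the $g_0$ top-generator directions of $D$, must be surjective, for otherwise some combination of $D$-blocks decouples; surjectivity gives $a_1\ge g_0$.

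The hard part will be the exact rank analysis that converts indecomposability into these sharp bounds. Two points need care. First, one must check that after the Sylvester reduction the permitted unitary normalization of $B$ together with the similarity normalization of $D$ concentrate the essential coupling of each $D$-block on its primary part, so that recombining equal-sized blocks removes all of their coupling rather than only the eigenvector component; this is exactly where positive definiteness on $U/U_0$ --- i.e. the restriction to unitary changes of $B$ --- is used, and it is the technical heart of the proof (with arbitrary changes of $B$ allowed one would only recover the weaker module-pair bounds). Second, in (b) the functionals attached to $D$-blocks of different sizes live on subspaces of $B$ of different dimensions, so the surjectivity argument must proceed level by level along the Jordan chains of $B$, exploiting the freedom in solving the successive Sylvester equations. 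With the reduced form of Section \ref{vgw} in hand, both reduce to a finite combinatorial check, completing (a)--(c).
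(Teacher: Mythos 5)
Your setup and your proof of part (a) are correct and coincide with the paper's: the admissible basis changes \eqref{fem} (Lemma \ref{smc}), the three elementary moves of Lemma \ref{feo}, the reduction by Schur form of $B$, Jordan form of $D$ and Sylvester moves (Lemma \ref{htp}), and the splitting-off of a cross-spectral block. The gap is in (b) and (c), and it sits exactly at the step you yourself defer as ``the technical heart'': you never prove that a dependency detected at the eigenvector/top-generator level can be converted into annihilation of the \emph{entire} coupling of a $D$-block by \emph{semiunitary} moves, and the mechanism you propose for doing so cannot work. Your Sylvester-cokernel normal form, with representatives ``in the row of the top generator of the $D$-block coupling it to the lowest generators of the $B$-block,'' presupposes Jordan chains in $B$; but by Lemma \ref{smc} the unitary part can only be transformed unitarily, so $B$ can be brought to Schur form but never to Jordan form, and those representatives are unreachable. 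Unitarity is not the tool that ``concentrates the essential coupling'' --- it is the obstruction --- and the paper's proof is organized precisely so that no reduction of the coupling relative to the internal structure of $B$ is ever needed. (Your instinct that the semiunitary statement is stronger than its module-pair analogue is right, since a semiunitarily indecomposable operator need not be indecomposable as a module pair; but the way the paper earns this extra strength is by avoiding, not exploiting, normalization of $B$.)

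Concretely, for (b) the paper works with the matrix $F$ of \eqref{wxo}: after the reduction to \eqref{111}, achieved by Sylvester moves and transformations of $D$ alone, each Jordan block of $\mc A_0$ owns one full row of $F$, and the columns of $F$ are indexed by \emph{all} of the generalized eigenspace of $\mc A_1$; no structure on $B_0$ enters, so a row dependency (handled with the one-directional row moves \eqref{gfv}, taking the smallest block index carrying a nonzero coefficient) produces a zero row, splits off that Jordan block, and contradicts indecomposability, giving the inequality at once. For (c), your claim that equal-sized blocks ``may be recombined by the centralizer of $D$ so as to annihilate the entire coupling of one of them'' is false as stated: the centralizer of $J_k(0_{n_k})$ acts \emph{simultaneously} on all the layer couplings $H_1,\dots,H_k$, so a recombination that zeroes a row of $H_1$ does not zero the corresponding rows of $H_2,\dots,H_k$. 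The paper first concentrates the coupling onto the columns indexed by $\ker\mc A_1$ --- via the unitary reduction $B_0\mapsto[B_3\ 0]$ and Sylvester moves killing $E_i$ and $G_i$, again never touching $B$'s fine structure --- and then runs the full staircase induction (reduce $H_1$ to $\matt{I_{r_1}&0\\0&0}$, clear below $I_{r_1}$ in $H_2$, reduce the rest to $I_{r_2}\oplus 0$, and so on); only after that staircase can one conclude that $n_k>h$ forces a row that is zero in \emph{every} $H_i$, hence a forbidden direct summand. So your proposal identifies the right invariants and the right contradiction, but both inequalities rest on assertions that are exactly the content of Section \ref{hhgy}, and the route you sketch toward them is blocked by the very unitarity constraint you single out.
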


\subsection{Canonical matrices of seminormal operators and Hermitian forms}

A \emph{\kv matrix} $A$ is a block matrix with two vertical strips and two horizontal strips, in which the diagonal blocks are square. We give its partition into blocks by writing $[A]_m$, in which $m\times m$ is the size of the first diagonal block.

We always consider the matrix of a linear operator ${\cal A}:U\to U$ in an $m$-orthonormal basis as a \kv matrix
 \begin{equation}             \label{gsi}
A=[A]_m=\arrr{A_{11}}{A_{12}}{A_{21}}{A_{22}},\quad \text{in which }A_{11} \text{ is }  m\times m.
\end{equation}

Let $\mc B:V\to V$ and $\mc C:W\to W$ be linear operators on semiunitary spaces. Let
\begin{equation}\label{okn}
B=\arrr{B_{11}}{B_{12}}{B_{21}}{B_{22}},\qquad
C=\arrr{C_{11}}{C_{12}}{C_{21}}{C_{22}}
\end{equation}
be their \kv matrices in an $m'$-orthonormal basis
$f_1,\dots,f_{n'}$ of $V$ and an $m''$-orthonormal basis
$g_1,\dots,g_{n''}$ of $W$. Then the \kv matrix of $\mc B\oplus  \mc C$ (see \eqref{cer}) in the $(m'+m'')$-orthonormal basis
\[
f_1,\dots,f_{m'},g_1\dots,g_{m''},f_{m'+1},
\dots,f_{n'},g_{m''+1},\dots,g_{n''}
\]
is the {\it block-direct sum} of $B$ and $C$:
\begin{equation}  \label{utj}
B\boxplus C :=
\left[\begin{array}{cc|cc}
 B_{11}&0&B_{12}&0\\
 0&C_{11}&0&C_{12}\\\hline
 B_{21}&0&B_{22}&0\\
 0&C_{21}&0&C_{22}\\
 \end{array}\right].
\end{equation}

Let $\mc A$ be a linear
operator on a semiunitary space $U$. A linear operator $\mc B:U\to U$ is \emph{adjoint} for $\mc A$ if
\begin{equation}\label{bks}
\l u,\mc Av\r=\l \mc B u, v\r\qquad \text{for all } u,v\in U.
\end{equation}

\begin{lemma}       \label{lk1}
Let $\mc A$ and $\mc B$ be linear
operators on a semiunitary space $U$. Let $\mc A_1$ and $\mc B_1$ be defined by \eqref{asz}.
\begin{itemize}
  \item[\rm(a)] The operator $\mc B$ is adjoint for $\mc A$ if
and only if $\mc A$ and $\mc B$ are bounded and
${\cal B}_1={\cal A}^*_1$.

\item[\rm(b)]
In matrix form, if $A=\matt{A_1&A_2\\A_3&A_4}$ and $B=\matt{B_1&B_2\\B_3&B_4}$ are the \kv matrices of $\mc A$ and $\mc B$ in a semiorthonormal basis, then the operator $\mc B$  is adjoint for $\mc A$ if and only if $A_2=B_2=0$ and $B_1=A_1^*$.
\end{itemize}
 \end{lemma}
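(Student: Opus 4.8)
The plan is to establish the matrix statement (b) first by a direct computation, and then to read off the operator statement (a) from (b) together with Lemma \ref{nlp}. Fix the semiorthonormal basis $e_1,\dots,e_n$ and denote by $x$ and $y$ the coordinate columns of arbitrary vectors $u,v\in U$. Since the form is semilinear in the first argument, linear in the second, and given by the Gram matrix $H_m$, we have $\l u,v\r=x^*H_my$, where $x^*:=\bar x^{\,T}$. Because $\mc Av$ and $\mc Bu$ have coordinate columns $Ay$ and $Bx$, the defining equation \eqref{bks} of adjointness becomes
\[
x^*H_mAy=(Bx)^*H_my=x^*B^*H_my\qquad\text{for all }x,y.
\]
Specializing $x,y$ to standard basis vectors shows that this holds for all $x,y$ if and only if the single matrix identity $H_mA=B^*H_m$ holds.

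Now I substitute $H_m=\matt{I_m&0\\0&0}$, $A=\matt{A_1&A_2\\A_3&A_4}$, and $B=\matt{B_1&B_2\\B_3&B_4}$ and multiply blockwise:
\[
H_mA=\matt{A_1&A_2\\0&0},\qquad
B^*H_m=\matt{B_1^*&B_3^*\\B_2^*&B_4^*}\matt{I_m&0\\0&0}=\matt{B_1^*&0\\B_2^*&0}.
\]
Equating the four blocks of $H_mA$ and $B^*H_m$ gives $A_1=B_1^*$, $A_2=0$, $B_2^*=0$, and the trivial identity $0=0$. Hence $H_mA=B^*H_m$ holds if and only if $A_2=B_2=0$ and $B_1=A_1^*$, which is exactly the assertion of (b). I note that $H_m$ is singular, so one cannot cancel it in $H_mA=B^*H_m$; the blockwise multiplication is what makes the singular factor harmless.

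Finally I deduce (a). By Lemma \ref{nlp}(c) the condition $A_2=0$ is equivalent to $\mc A$ being bounded, and likewise $B_2=0$ is equivalent to $\mc B$ being bounded; once boundedness is known, $A_1$ and $B_1$ are the matrices of the induced operators $\mc A_1$ and $\mc B_1$ with respect to the orthonormal basis of the unitary space $U/U_0$ obtained from $e_1,\dots,e_m$. On a unitary space the adjoint of an operator is represented in an orthonormal basis by the conjugate transpose of its matrix, so $B_1=A_1^*$ is equivalent to $\mc B_1=\mc A_1^*$. Combining these equivalences with (b) yields (a). This step is routine rather than hard; the only point requiring care is that $\mc A_1$ and $\mc B_1$ are defined only after $U_0$ is known to be invariant, which is precisely why the matrix conditions $A_2=B_2=0$ furnished by (b) must be invoked (via Lemma \ref{nlp}) before passing to the induced operators on $U/U_0$.
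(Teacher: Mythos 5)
Your proof is correct and follows essentially the same route as the paper's: both reduce the defining equation \eqref{bks} to the single matrix identity $(I_m\oplus 0)A=B^*(I_m\oplus 0)$ and read off part (b) blockwise. The only difference is that you spell out the equivalence of (a) and (b) --- via Lemma \ref{nlp} and the fact that adjoints on the unitary space $U/U_0$ are represented by conjugate-transpose matrices in an orthonormal basis --- a step the paper treats as immediate.
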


\begin{proof} Since (a) and (b) are equivalent, it suffices to prove (b).
The equality \eqref{bks} holds if and only if $(I_m\oplus 0)A=B^*(I_m\oplus 0)$, if and only if $A_2=B_2=0$ and $B_1=A_1^*$.
\end{proof}

Let $\cal A$ be a linear operator  on a semiunitary space $U$. The operator $\cal A$ is {\it selfadjoint} if $\l{\cal A}u,v\r = \l u, {\cal
A}v\r$, and $\cal A$ is {\it metric} if $\l{\cal A}u, {\cal A}v\r=\l u,v\r$
for all $u,v\in U$.

\begin{lemma}\label{bvn}
A linear operator $\cal A$ on a semiunitary space $U$ is selfadjoint (respectively, metric) if and only if it is bounded and the operator ${\cal A}_1$ on the unitary space $U_1$ defined in \eqref{asz} is selfadjoint (respectively, metric).
\end{lemma}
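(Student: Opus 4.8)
The plan is to handle the selfadjoint and metric cases separately, reducing each to the corresponding property of $\mc A_1$ on the quotient unitary space $U_1=U/U_0$. The common tool is the elementary fact that the form descends to $U_1$: since $\l\cdot,\cdot\r$ is positive semidefinite, its isotropic part $U_0$ coincides with its radical (by Cauchy--Schwarz, $\l u_0,u_0\r=0$ forces $\l u_0,v\r=0$ for all $v$), so $\l u+U_0,v+U_0\r:=\l u,v\r$ is a well-defined positive definite form on $U_1$ satisfying $\l u,v\r=\l u+U_0,v+U_0\r$ for all $u,v\in U$. I would record this identity at the outset, since both cases rely on it.

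For the selfadjoint case I would simply specialize Lemma~\ref{lk1}. By definition $\mc A$ is selfadjoint precisely when $\l\mc Au,v\r=\l u,\mc Av\r$ for all $u,v$, that is, when $\mc A$ is its own adjoint. Applying Lemma~\ref{lk1}(a) with $\mc B=\mc A$, this holds if and only if $\mc A$ is bounded and $\mc A_1=\mc A_1^*$, which says exactly that $\mc A$ is bounded and $\mc A_1$ is selfadjoint.

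For the metric case Lemma~\ref{lk1} does not apply, so I would argue directly. If $\mc A$ is metric then $\|\mc Au\|^2=\l\mc Au,\mc Au\r=\l u,u\r=\|u\|^2$, so $\mc A$ is bounded (with $c=1$); and invoking boundedness (Lemma~\ref{nlp}) to guarantee $\mc A_1(u+U_0)=\mc Au+U_0$ together with the descent identity gives $\l\mc A_1(u+U_0),\mc A_1(v+U_0)\r=\l\mc Au,\mc Av\r=\l u,v\r=\l u+U_0,v+U_0\r$, so $\mc A_1$ is metric. Conversely, if $\mc A$ is bounded and $\mc A_1$ is metric, the same chain read in reverse yields $\l\mc Au,\mc Av\r=\l u,v\r$ for all $u,v$, so $\mc A$ is metric. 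The computations are routine; the only point needing care is the descent of the form to $U_1$ and the compatibility $\mc A_1(u+U_0)=\mc Au+U_0$, after which both equivalences are immediate, so I anticipate no substantial obstacle.
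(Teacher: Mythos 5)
Your proof is correct, and it differs from the paper's in a genuine (if partial) way. The paper handles boundedness for both cases at once by contrapositive: if $\mc A$ is unbounded then, by Lemma~\ref{nlp}, there is $u\in U_0$ with $\mc Au\notin U_0$, whence $\l\mc Au,\mc Au\r\ne 0$ while $\l u,\mc A^2u\r=0$ and $\l u,u\r=0$, so $\mc A$ is neither selfadjoint nor metric; it then verifies both equivalences by exactly the direct quotient computations you use for the metric case. You instead dispatch the selfadjoint case entirely by specializing Lemma~\ref{lk1}(a) to $\mc B=\mc A$ --- observing that selfadjointness is precisely being one's own adjoint, so the equivalence ``bounded and $\mc A_1=\mc A_1^*$'' is immediate --- and in the metric case you obtain boundedness directly from norm preservation ($\|\mc Au\|=\|u\|$, so $c=1$ works in \eqref{abe}) rather than by contrapositive. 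Both routes are valid; yours is more modular and slightly shorter (the selfadjoint half becomes a one-line corollary of an already-proved lemma), while the paper's uniform contrapositive keeps the two cases parallel and shows concretely how failure of boundedness violates both properties. Your explicit justification that the form descends to $U_1$ --- by Cauchy--Schwarz the isotropic part $U_0$ is the radical, so $\l u+U_0,v+U_0\r:=\l u,v\r$ is well defined and positive definite --- is a point the paper uses silently in writing $\l \mc Au+\mc AU_0,v+U_0\r=\l\mc Au,v\r$; making it explicit is a merit of your write-up, not a deviation.
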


\begin{proof}
Let $U$ be a semiunitary space.

$\Longrightarrow$.
Let  ${\cal A}:U\to U$ be an unbounded operator, and let $u\in U_0$ be such that ${\cal
A}u \notin U_0.$ Then ${\cal A}$ is not selfadjoint since $\l \mc Au,\mc Au\r\ne 0$ and $\l u,\mc A^2u \r= 0$; ${\cal A}$ is not metric since $\l u, u \r= 0$ and $\l \mc Au,\mc Au\r\ne 0$.

Hence, if ${\cal A}:U\to U$ is a selfadjoint or metric operator, then it is bounded and $U_0$ is its invariant subspace. If ${\cal A}$ is selfadjoint, then $\l{\cal A}(u+U_0),v+U_0\r =
\l\mc Au+\mc AU_0,v+U_0\r=
\l\mc Au,v\r=\l u,\mc A v\r=
 \l u+U_0, {\cal
A}(v+U_0)\r$
for all $u,v\in U$, and so ${\cal A}_1:U_1\to U_1$ is selfadjoint.
If ${\cal A}$ is metric, then $\l{\cal A}(u+U_0), {\cal A}(v+U_0)\r=\l u+U_0,v+U_0\r$
for all $u,v\in U$, and so ${\cal A}_1:U_1\to U_1$ is metric.

$\Longleftarrow$. Let ${\cal A}:U\to U$ be a bounded operator. If ${\cal A}_1$ is selfadjoint,
then $\l\mc Au,v\r=\l{\cal A}(u+U_0),v+U_0\r =
\l u+U_0, {\cal
A}(v+U_0)\r=\l u,\mc A v\r$, and so $\mc A$ is selfadjoint. If ${\cal A}_1$ is metric,
then $\l\mc Au,\mc Av\r=\l{\cal A}(u+U_0),\mc A (v+U_0)\r =
\l u+U_0, v+U_0\r=\l u, v\r$, and so $\mc A$ is metric.
\end{proof}

The following definition is natural in view of Lemma \ref{bvn}: an
operator $\cal A$ on a semiunitary space $U$ is {\it
seminormal} if $\cal A$ is bounded and ${\cal A}_1$ is a normal
operator on the unitary space $U_1$.
Let  $\mc A$ and $\mc B$ be linear
operators on a semiunitary space $U$. If $\mc B$ is adjoint for $\mc A$ and $\mc A\mc B=\mc B\mc A$, then $\mc A$ is seminormal; the converse is not true.

Write
\begin{equation}\label{zxc}
J_n(\lambda ):=\mat{\lambda &&&0\\1&\lambda \\&\ddots&\ddots\\0&&1&\lambda}\qquad
(n\text{-by-}n,\ \lambda \in\cc).
\end{equation}

The following theorem is proved in Section \ref{v2v}.

\begin{theorem}       \label{ttt}
{\rm(a)}
Let  $\cal A$ be a seminormal operator on a semiunitary
space. Then there exists a semiorthonormal basis, in which the \kv matrix of $\mc A$ is a block-direct sum, uniquely determined up
to permutation of summands, of\/ \kv matrices of the type
\begin{equation}  \label{3.1}
[J_n(\lambda)]_{l} \quad\text{in which }\lambda \in
{\mathbb C} \text{ and }l\in\{0,1\}
\end{equation}
(see \eqref{gsi}).
In particular,
\begin{itemize}
\item[{\rm(a$_1$)}]  if  $\cal A$ is a selfadjoint operator on a semiunitary
space, then there exists a semiorthonormal basis, in which the \kv matrix of $\mc A$ is a block-direct sum, uniquely determined up
to permutation of summands, of\/ \kv matrices of the types
\begin{equation*}  \label{3.1x}
[J_n(\lambda)]_1\ (\lambda \in
{\mathbb R}),\qquad [J_n(\mu)]_0\ (\mu \in
{\mathbb C});
\end{equation*}

\item[{\rm(a$_2$)}] if  $\cal A$ is a metric operator on a semiunitary
space, then there exists a semiorthonormal basis, in which the \kv matrix of $\mc A$ is a block-direct sum, uniquely determined up
to permutation of summands, of\/ \kv matrices of the types
\begin{equation*}  \label{3a.1}
[J_n(\lambda)]_1\ (\lambda \in
{\mathbb C}\text{ and }|\lambda|=1),\qquad [J_n(\mu)]_0\ (\mu \in
{\mathbb C}).
\end{equation*}
\end{itemize}

{\rm(b)}
Let  $\cal F$ be a Hermitian form on a semiunitary
space. Then there exists a semiorthonormal basis,
in which the \kv matrix of $\cal F$ is a block-direct sum, uniquely determined up to
permutation of summands, of \kv matrices of the types
\begin{equation}  \label{3.2}
\arrr{0}{1}{1}{0}_1, \quad [\lambda]_1\ (\lambda\in \mathbb R),\quad  [1]_0,\quad [-1]_0,\quad [0]_0.
\end{equation}
\end{theorem}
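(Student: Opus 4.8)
The plan is to work with the \kv matrix $A=\matt{B&0\\C&D}$ of $\mc A$ in a semiorthonormal basis; by Lemma \ref{nlp} it has this lower block-triangular shape because $\mc A$ is bounded, $B$ being the matrix of the normal operator $\mc A_1$ and $D$ the matrix of $\mc A_0$. First I would pin down the admissible changes of basis: the condition $S^*H_mS=H_m$ forces $S=\matt{P&0\\Q&R}$ with $P$ unitary, $R$ invertible, and $Q$ arbitrary, and then $\mc A$ is replaced by $S^{-1}AS$, under which $B\mapsto P^*BP$, $D\mapsto R^{-1}DR$, and $C\mapsto R^{-1}(CP+DQ-QP^*BP)$. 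Crucially the rules for $B$ and $D$ involve neither $C$ nor $Q$, so for part~(a) I can first use the spectral theorem to make the normal matrix $B$ diagonal (grouped by its distinct eigenvalues) and use $R$ to bring $D$ to Jordan form. With $B,D$ fixed the residual group is: block-diagonal unitary $P$ commuting with $B$, matrices $R$ in the centralizer of the Jordan matrix $D$, and arbitrary $Q$; it acts on $C$ by $C\mapsto R^{-1}(CP+DQ-QB)$. Since the Sylvester operator $Q\mapsto DQ-QB$ is invertible on every block where the eigenvalues of $D$ and $B$ differ, the free parameter $Q$ clears all of $C$ outside the blocks where $\mc A_0$ and $\mc A_1$ share an eigenvalue $\lambda$, and the corresponding summands split off as $[J_n(\mu)]_0$ (unmatched Jordan blocks of $\mc A_0$) and $[\lambda]_1=[J_1(\lambda)]_1$ (unmatched eigenvectors of $\mc A_1$). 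It then remains to treat one shared eigenvalue; subtracting $\lambda I$ (which is compatible with the equivalence) I may assume $\lambda=0$, so $B=0$ and $D$ is nilpotent.

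I expect the heart of the matter, and the main obstacle, to be this last reduction. With $D=\bigoplus_i J_{n_i}(0)$ and $B=0$, the move $C\mapsto C+DQ$ clears every entry of $C$ outside the single generator row of each Jordan block, leaving a reduced matrix $\hat C$; the residual action becomes $\hat C\mapsto \bar R\,\hat C P$, where $P$ is unitary and $\bar R$ runs over the group induced on $V/\operatorname{im}D$ by the centralizer of $D$, which is a parabolic subgroup adapted to the ordering of the blocks by size. The task is to show that under this (parabolic)$\times$(unitary) action $\hat C$ reduces to a $0$--$1$ matrix with at most one $1$ per row and column. Each surviving $1$ glues one Jordan block $J_{n_i}(0)$ of $\mc A_0$ to one eigenvector of $\mc A_1$; in the partition $[\,\cdot\,]_1$ this gluing reproduces exactly $J_{n_i+1}(\lambda)$, giving an indecomposable $[J_{n_i+1}(\lambda)]_1$, while the unmatched rows and columns give $[J_{n_i}(\lambda)]_0$ and $[\lambda]_1$. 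Verifying that this matching is canonical — that the multiset of matched block-sizes is an invariant — and that the gluing really yields $J_{n_i+1}(\lambda)$ is the delicate combinatorial point.

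Existence and uniqueness of the global decomposition I would take from \cite[Theorem 2]{ser_izv}, which provides a decomposition into indecomposables that is unique up to permutation and isometry; it then suffices to know that the indecomposable seminormal operators are precisely the blocks $[J_n(\lambda)]_l$ with $l\in\{0,1\}$ and that distinct triples $(n,\lambda,l)$ are non-isometric, which is clear because $l$ is the dimension of the unitary part, $n$ the total dimension, and $\lambda$ the eigenvalue. Parts~(a$_1$) and (a$_2$) follow at once: by Lemma \ref{bvn}, $\mc A$ is selfadjoint (respectively metric) iff $\mc A_1$ is, which forces the eigenvalues $\lambda$ attached to the blocks $[J_n(\lambda)]_1$ to be real (respectively unimodular), whereas the eigenvalues $\mu$ of the unmatched isotropic blocks $[J_n(\mu)]_0$ stay arbitrary since $\mc A_0$ is unconstrained.

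For part~(b) I would run the parallel reduction for the $*$-congruence $F\mapsto S^*FS$, $S\in G$, of the Hermitian matrix $F=\matt{F_{11}&F_{12}\\F_{12}^*&F_{22}}$. First reduce $F_{22}$ by congruence (via $R$) to its inertia form $I_p\oplus(-I_q)\oplus 0_z$; on the nondegenerate $\pm I$ part the block $Q$ clears the coupling $F_{12}$ by completing the square (altering $F_{11}$ by a Schur complement), splitting off the summands $[1]_0$ and $[-1]_0$. What survives is a unitary space carrying $F_{11}$ together with $z$ isotropic directions of zero self-form coupled to it by $F_{12}$; reducing $F_{12}$ to a rank normal form with the unitary $P$ on the left and the invertible $R$ on the right pairs each unit of its rank with one isotropic direction, and for each pair the freedom $Q$ — which tilts a unitary direction by an isotropic one, preserving its length — annihilates the remaining diagonal entry of $F_{11}$ exactly as in the $2\times2$ model, producing the hyperbolic block $\arrr{0}{1}{1}{0}_1$. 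The unpaired isotropic directions give $[0]_0$, and on the decoupled unitary directions the spectral theorem diagonalizes $F_{11}$ into the real blocks $[\lambda]_1$. Uniqueness again comes from \cite[Theorem 2]{ser_izv} and the invariance of the numbers of blocks of each type; the main obstacle is the bookkeeping that keeps the nondegenerate, hyperbolic, and anisotropic parts from interfering across the successive steps.
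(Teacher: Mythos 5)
Your proposal is correct and takes essentially the same route as the paper: the same transformation group \eqref{fem} (the paper's Lemmas \ref{smc} and \ref{feo}), the same Sylvester-equation splitting by eigenvalues (Lemma \ref{htp}), the same reduction of the coupling block under the (centralizer of $D$)$\times$(unitary) action to identify the indecomposables $[J_n(\lambda)]_l$, the same appeal to \cite[Theorem 2]{ser_izv} for uniqueness, and, for part (b), the same inertia/Schur-complement/hyperbolic-pair reduction. The ``delicate combinatorial point'' you flag is settled in the paper by exactly the staircase reduction you describe (reduce the coupling block of the largest Jordan group to $[I\;0\,\cdots\,0]$, clear the entries above it via \eqref{gfv}, recurse), and the canonicity of the resulting matching comes for free from the Krull--Schmidt statement \eqref{frb} that you also invoke.
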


The statement (a$_1$) is \cite[Theorem 13]{meh2}.

\subsection{An algorithm for reducing to canonical form of matrices of systems of linear mappings on semiunitary
spaces}
\label{z3va}

Let $A=[A_{ij}]_{i,j=1}^2$ and $B=[B_{ij}]_{i,j=1}^2$ be \kv matrices, in which $A_{11}$ and $B_{11}$ are $m\times m$. We say that $A$ and $B$ are \emph{semiunitarily similar} if $S^{-1}AS=B$ for some nonsingular matrix of the form
\begin{equation}           \label{fem}
S=\mat{ S_{11}&0\\
S_{21}&S_{22}},\quad\text{in which $S_{11}$ is an $m\times m$ unitary matrix.}
\end{equation}

In Section \ref{z3v}, which can be read independently of Sections \ref{vgw}--\ref{v2v}, we give an algorithm that reduces a matrix
of a bounded operator on a semiunitary space to canonical form. This algorithm reduces each \kv matrix $A$ to some \kv matrix $A_{\text{can}}$ by transformations of semiunitary similarity (see \eqref{jdf}) such that
\begin{quote}
two \kv matrices $A$ and $B$ are semiunitarily similar if and only if $A_{\text{can}}=B_{\text{can}}$.
\end{quote}
Moreover, this algorithm reduces to canonical form matrices of an arbitrary finite system of semiunitary spaces and linear mappings between them, which we consider as a semiunitary representation of a quiver (see Section \ref{n4d}). Thus, a \kv matrix $A$ is canonical if the algorithm acts identically on it. Unfortunately, it is impossible to describe explicitly the set of canonical forms since this description would contain, in particular, the canonical matrices of each system of linear mappings.
Even the problem of classifying linear operators on a unitary space is considered as hopeless because it contains the problem of classifying an arbitrary system consisting of several unitary spaces and linear mappings between them; i.e., the problem of classifying unitary representations of an arbitrary quiver (see \cite{ser1,ser2}).
Special cases of this algorithm are
Littlewood's and Belitskii's algorithms.

Littlewood's algorithm \cite{lit} reduces a square matrix to canonical form under unitary similarity. It was rediscovered in \cite{ben,ser1}; see the survey \cite{sha}. Unitary and Euclidean representations of quivers are studied in \cite{ser1,ser2} using Littlewood's algorithm.

Belitskii's algorithm \cite{bel,bel1,ser_can} reduces to canonical form the matrices of an arbitrary system of linear mappings. The geometric form of Tame-Wild Theorem \cite{gab_vos} was formulated and proved in \cite{ser_can} in terms of Belitskii's canonical matrices.
Belitskii's algorithm is used in \cite{br,ch,ch1,gal}.

\section{Matrices of bounded linear operators}\label{vgw}

\begin{lemma}\label{smc}
Let $e_1,\dots,e_n$ be an $m$-orthonormal basis of a semiunitary space $U$, and let $f_1,\dots,f_n$ be any basis of $U$. Then $f_1,\dots,f_n$ is $m$-orthonormal if and only if the change of coordinates matrix has the form
\eqref{fem}.
\end{lemma}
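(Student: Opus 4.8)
The plan is to translate the geometric condition that $f_1,\dots,f_n$ is $m$-orthonormal into a single matrix identity and then read off the block structure of $S$. Let $S=[s_{ij}]$ be the change of coordinates matrix, so that $f_j=\sum_i s_{ij}e_i$. Using that $\l\cdot,\cdot\r$ is semilinear in the first argument and linear in the second, I would first compute the Gram matrix of the new basis: $\l f_i,f_j\r=\sum_{k,l}\bar s_{ki}s_{lj}\l e_k,e_l\r=(S^*H_mS)_{ij}$, where $H_m$ is the Gram matrix of $e_1,\dots,e_n$ from \eqref{gru}. Hence $f_1,\dots,f_n$ is $m$-orthonormal if and only if $S^*H_mS=H_m$, and the whole lemma reduces to describing the nonsingular solutions $S$ of this equation.

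Next I would partition $S=\matt{S_{11}&S_{12}\\S_{21}&S_{22}}$ conformally with $H_m$, so that $S_{11}$ is $m\times m$. Since $H_m=\mat{I_m&0\\0&0}$ acts as the projection onto the first $m$ coordinates, $H_mS$ keeps only the top strip $[S_{11}\ S_{12}]$ of $S$, and a direct block multiplication gives $S^*H_mS=\matt{S_{11}^*S_{11}&S_{11}^*S_{12}\\S_{12}^*S_{11}&S_{12}^*S_{12}}$; in particular the equation does not involve $S_{21}$ or $S_{22}$ at all. Setting this equal to $H_m$ yields the three conditions $S_{11}^*S_{11}=I_m$, $S_{11}^*S_{12}=0$, and $S_{12}^*S_{12}=0$.

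Finally I would solve these conditions. The first says the square matrix $S_{11}$ is unitary, hence invertible; multiplying $S_{11}^*S_{12}=0$ on the left by $(S_{11}^*)^{-1}$ then forces $S_{12}=0$, which makes the remaining condition automatic. Thus $S^*H_mS=H_m$ holds exactly when $S_{11}$ is unitary and $S_{12}=0$, i.e. when $S$ has the form \eqref{fem}; the converse is the same block computation read backwards. Since $f_1,\dots,f_n$ is assumed to be a basis, $S$ is automatically nonsingular, which together with $S_{12}=0$ and $S_{11}$ unitary also forces $S_{22}$ to be nonsingular, matching \eqref{fem} completely. There is no real obstacle here beyond bookkeeping; the only points that need care are keeping the conjugation on the correct (first) argument so that the Gram matrix comes out as $S^*H_mS$ rather than $S^{T}H_mS$, and using invertibility of $S_{11}$ to pass from $S_{11}^*S_{12}=0$ to $S_{12}=0$.
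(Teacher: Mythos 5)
Your proof is correct, and it turns on the same identity as the paper's: after passing to Gram matrices, $m$-orthonormality of $f_1,\dots,f_n$ is equivalent to $S^*H_mS=H_m$ with $H_m$ as in \eqref{gru}, and the backward implication is the same one-line computation in both proofs. The difference lies in how the forward implication extracts the block structure of $S$. The paper first gets $S_{12}=0$ geometrically: the trailing vectors $e_{m+1},\dots,e_n$ and $f_{m+1},\dots,f_n$ of any two $m$-orthonormal bases are both bases of the isotropic part $U_0$ from \eqref{rbg} (a fact it leaves to the reader), so each $f_j$ with $j>m$ has zero coordinates along $e_1,\dots,e_m$; only then does it read $S_{11}^*S_{11}=I_m$ off the $(1,1)$ block of $S^*H_mS=H_m$. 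You instead expand the whole block product, obtaining $S_{11}^*S_{11}=I_m$, $S_{11}^*S_{12}=0$, and $S_{12}^*S_{12}=0$, and eliminate $S_{12}$ algebraically using the invertibility of the unitary block $S_{11}$ (alternatively, $S_{12}^*S_{12}=0$ by itself forces $S_{12}=0$, since its diagonal entries are the squared lengths of the columns of $S_{12}$). Your route is purely matrix-algebraic and self-contained, needing no facts about $U_0$; the paper's route is shorter and makes visible the conceptual meaning of the zero block in \eqref{fem} --- it encodes exactly the basis-independence of the isotropic part --- which is the viewpoint exploited again in Lemma \ref{nlp}. Either argument proves the lemma.
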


\begin{proof}
$\Longrightarrow$. Let the basis $f_1,\dots,f_n$ be $m$-orthonormal. Let the change of coordinates matrix be partitioned as follows: $S=[S_{ij}]_{i,j=1}^2$, in which $S_{11}$ is $m\times m$.
Then the block $S_{12}$ is zero since both $e_{m+1},\dots,e_n$ and $f_{m+1},\dots,f_n$ are bases of $U_0$. The block $S_{11}$ is unitary since $I_m\oplus 0_{n-m}$ is the matrix of the scalar product in both $e_1,\dots,e_n$ and $f_1,\dots,f_n$, and hence  $S^*(I_m\oplus 0_{n-m})S=I_m\oplus 0_{n-m}$, which implies that $S_{11}^*S_{11}=I_m$.

$\Longleftarrow$. If the change of coordinates matrix has the form \eqref{fem}, then $S^*(I_m\oplus 0_{n-m})S=I_m\oplus 0_{n-m}$, and so $f_1,\dots,f_n$ is an $m$-orthonormal basis.
\end{proof}

Let $B$ and $C$ be two  \kv matrices of the form \eqref{okn}, in which the blocks $B_{11}$ and $C_{11}$ are $m\times m$.
By Lemma \ref{smc},
\begin{equation}\label{jdf}
\parbox[c]{0.84\textwidth}{the problem of classifying linear operators on a semiunitary space is reduced to the problem of classifying \kv matrices up to semiunitary similarity.}
\end{equation}

We say that a \kv matrix is \emph{decomposable} if it is semiunitarily similar to a block-direct sum \eqref{utj} of \kv matrices of smaller sizes.  A linear operator is decomposable (see \eqref{cer}) if and only if its matrix is decomposable.

The second problem in \eqref{jdf} is reduced to the problem of classifying indecomposable \kv matrices up to semiunitary similarity since
\begin{equation}\label{frb}
\parbox[c]{0.84\textwidth}{each \kv matrix  is semiunitarily similar to a block-direct sum of indecomposable \kv matrices, and this sum is uniquely determined up to semiunitary similarity of summands}
\end{equation}
(compare with \eqref{ine}).
This statement is proved  by applying \cite[Theorem 2]{ser_izv} to a pair consisting of the Hermitian form $\l \cdot,\cdot\r$ and a linear operator because
by \cite[Theorem 2]{ser_izv} each system of sesquilinear forms and linear operators on a complex vector space is decomposed into a direct sum of indecomposable systems, and this sum is uniquely determined up to isomorphisms of summands.

For each $a=[\alpha _1\,\dots\,\alpha _k]^T\in \cc^{k\times 1}$, we define
\begin{equation*}\label{bfh}
|a|:=\sqrt{\bar \alpha _1\alpha _1+\dots+\bar \alpha _k\alpha _k}.
\end{equation*}
Therefore, if $(\alpha _1,\dots,\alpha _n)$ are the coordinates of a vector $u$ of a semiunitary space in an $m$-orthonormal basis and $a':=[\alpha _1\,\dots\,\alpha _m]^T$, then
\begin{equation}\label{qha}
\|u\|=|a'|.
\end{equation}

\begin{lemma}           \label{htp}
Let $\mc A$ be a bounded linear operator  on a semiunitary space. Let $\lambda _1,\dots,\lambda _s$ be all distinct eigenvalues of $\mc A_1$ in any prescribed order. Then the \kv matrix of $\mc A$ in some $m$-orthonormal basis has the form
\begin{equation}           \label{fri}
A=\arr{{ccc|ccc}B_{11}&&0\\&\ddots&&&0\\
B_{s1}&&B_{ss}\\ \hline
C_{11}&\dots&C_{1s}&J_{m_1}(\mu _1)&&0\\
\vdots&&\vdots&&\ddots\\
C_{r1}&\dots&C_{rs}&0&&J_{m_r}(\mu _r)},
\end{equation}
in which each $B_{ii}$ is a lower triangular matrix with the single eigenvalue $\lambda_i$, each $J_{m_j}(\mu _j)$ is of the form \eqref{zxc}, and
\begin{equation}\label{MJX}
C_{ij}=0\quad\text{ if }
\lambda _i\ne\mu _j.
\end{equation}
\end{lemma}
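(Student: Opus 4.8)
The plan is to install the three features of \eqref{fri}---block lower triangularity of the $\mc A_1$-part, Jordan form of the $\mc A_0$-part, and the vanishing condition \eqref{MJX}---one after another, using the three independent ingredients of a transition matrix of the form \eqref{fem}. By Lemma \ref{nlp} the matrix of $\mc A$ in an $m$-orthonormal basis is already lower block triangular, $A=\matt{B&0\\C&D}$, with $B$ the $m\times m$ matrix of $\mc A_1$ and $D$ the matrix of $\mc A_0$; by Lemma \ref{smc} the admissible changes of basis are the conjugations $A\mapsto S^{-1}AS$ with $S=\matt{S_{11}&0\\S_{21}&S_{22}}$, $S_{11}$ unitary and $S_{22}$ nonsingular. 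A direct computation gives
\[
S^{-1}AS=\mat{S_{11}^{*}BS_{11}&0\\ \ast&S_{22}^{-1}DS_{22}},
\]
so $S_{11}$ acts on $B$ by unitary similarity and $S_{22}$ acts on $D$ by arbitrary similarity, independently of one another. Since the matrices \eqref{fem} are closed under multiplication, the successive steps below compose into one admissible change of basis.

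First I would set $S_{21}=0$ and choose $S_{11}$ and $S_{22}$ simultaneously. By Schur's theorem, pick $S_{11}$ unitary so that $S_{11}^{*}BS_{11}$ is lower triangular with its eigenvalues down the diagonal in the prescribed order $\lambda_1,\dots,\lambda_s$, equal eigenvalues grouped together; this displays $B$ as block lower triangular with diagonal blocks $B_{ii}$ that are lower triangular and carry the single eigenvalue $\lambda_i$. Independently, choose $S_{22}$ so that $S_{22}^{-1}DS_{22}$ is the Jordan form $J_{m_1}(\mu_1)\oplus\dots\oplus J_{m_r}(\mu_r)$ with blocks \eqref{zxc}. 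After this step $B$ and $D$ already have the shapes required in \eqref{fri}, while the coupling block has turned into some uncontrolled $S_{22}^{-1}CS_{11}$.

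The remaining freedom is $S_{21}$. Taking $S_{11}=I$ and $S_{22}=I$ leaves $B$ and $D$ untouched and replaces $C$ by $C+DX-XB$ with $X:=S_{21}$, so the final step is to choose $X$ making $C+DX-XB$ have the block pattern \eqref{MJX}. Block by block, and using that $B$ is lower triangular, the $(i,j)$ block of $C+DX-XB$ equals $C_{ij}+J_{m_i}(\mu_i)X_{ij}-X_{ij}B_{jj}-\sum_{k>j}X_{ik}B_{kj}$; setting it to zero for the mismatched blocks amounts to the Sylvester equation
\[
J_{m_i}(\mu_i)X_{ij}-X_{ij}B_{jj}=-C_{ij}+\sum_{k>j}X_{ik}B_{kj}.
\]
Because the correction $\sum_{k>j}X_{ik}B_{kj}$ involves only columns $k>j$, I would solve the columns in the order $j=s,s-1,\dots,1$, so that the right-hand side is already known when column $j$ is reached. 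For a mismatched block ($\mu_i\neq\lambda_j$) the operator $X_{ij}\mapsto J_{m_i}(\mu_i)X_{ij}-X_{ij}B_{jj}$ is invertible, since $J_{m_i}(\mu_i)$ and $B_{jj}$ have the disjoint spectra $\{\mu_i\}$ and $\{\lambda_j\}$; hence $X_{ij}$ can be chosen to annihilate that block. For a matched block I set $X_{ij}=0$. The outcome is exactly \eqref{fri} together with \eqref{MJX}.

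The main obstacle is this last step: Schur's theorem only makes $B$ block \emph{triangular}, not block diagonal, so the Sylvester equation for $C$ does not decouple across the block columns of $B$, and the elimination must be organized as a right-to-left cascade, each stage relying on the unique solvability of a single Sylvester equation on a mismatched block. The other ingredients---Schur's theorem, the Jordan form, and the explicit form of $S^{-1}AS$---are routine.
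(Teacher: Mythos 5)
Your proof is correct and takes essentially the same route as the paper: Schur triangularization of $B$ by the unitary block, Jordan form of $D$ by the nonsingular block, and then annihilation of the mismatched coupling blocks via Sylvester equations with disjoint spectra, using the $S_{21}$ freedom. The only difference is bookkeeping --- the paper kills the blocks $C_{pq}$ one at a time, each by a transformation whose $X$ has a single nonzero block, processed in an order that leaves already-reduced blocks untouched, whereas you solve for the whole matrix $X$ at once by a right-to-left column cascade; both orderings work for the same reason.
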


\begin{proof}
Let $A=\matt{B&0\\C&D}$ be a \kv matrix of $\mc A$ in an $m$-orthonormal basis. Changing the basis, we can reduce it by transformations $S^{-1}AS$, in which $S$ is an arbitrary nonsingular matrix of the form \eqref{fem}. By Schur's triangularization theorem \cite[Theorem 2.3.1]{hor}, there exists a unitary matrix $P$ such that $P^{-1}BP$ is lower triangular with eigenvalues in any prescribed order; we take them in the order
\[
\lambda_1,\dots,\lambda _1,
\lambda_2,\dots,\lambda _2,\dots,
\lambda_s,\dots,\lambda _s.
\]
There exists a nonsingular matrix $Q$ such that $Q^{-1}DQ$ is a Jordan matrix. Replacing $A$ by $(P\oplus Q)^{-1}A(P\oplus Q)$, we reduce $A$ to the form \eqref{fri}, in which the blocks $C_{ij}$ may not satisfy \eqref{MJX}.

If \eqref{MJX} does not hold, then we take the first $C_{pq}\ne 0$ with
$\lambda _p\ne\mu _q$
in the sequence
\begin{equation}\label{aah}
C_{1s},\ C_{1,s-1}, \ \dots, \ C_{11}, \ C_{2s}, \
C_{2,s-1}, \ \dots, \ C_{21}, \ \dots\ .
\end{equation}
Take $S:=\matt{I_m&0\\X&I}$, in which $X=[X_{ij}]$ is a block matrix partitioned conformally with $[C_{ij}]$, and  $X_{ij}=0$ if $(i,j)\ne (p,q)$.
Then all blocks of $A$ outside of $[C_{ij}]$ and all blocks $C_{ij}$ that precede $C_{pq}$ in \eqref{aah} coincide with the corresponding blocks of $A'=S^{-1}AS$. The block $C_{pq}$ becomes  $C'_{pq}=C_{pq}-X_{pq}B_{qq}+ J_{m_p}(\mu _p) X_{pq}$ in $A'$. Since
$\mu_p\ne\lambda_q$, by Gantmacher \cite[Chapter VIII, \S\,3]{gan1} there exists $X_{pq}$ such that $C'_{pq}=0$.

In the same manner, we take the first nonzero block $C'_{p'q'}$ with
$\lambda _{p'}\ne\mu _{q'}$
in the sequence
\[
C'_{1s},\ C'_{1,s-1}, \ \dots, \ C'_{11}, \ C'_{2s}, \
C'_{2,s-1}, \ \dots, \ C'_{21}, \ \dots,
\]
make it zero, and so on, until we obtain a matrix \eqref{fri} satisfying \eqref{MJX}.
\end{proof}

\begin{lemma}   \label{feo}
A  \kv matrix $A=\matt{B&0\\C&D}$ is semiunitarily similar to a  \kv matrix $A'=\matt{B'&0\\C'&D'}$ if and only if $A$ is reduced to $A'$ by a finite sequence of the following transformations:
\begin{itemize}
\item [\rm(i)] a unitary row-transformation in $[B\,0]$,
then the inverse unitary column-transformation in
$\matt{B\\C}$ (thus, $B$ is reduced by
transformations of unitary similarity);

\item [\rm(ii)] an elementary row-transformation in
$[C\,D]$, then the inverse
column-transformation in $\matt{0\\D}$ (thus, $D$ is reduced by transformations of similarity);

\item[\rm(iii)] adding  row $i$ of $B$ multiplied by $c\in\cc$
to row $j$ of $C$, then subtracting  column $j$
of $D$ multiplied by $c$ from column $i$ of $C$.
\end{itemize}
\end{lemma}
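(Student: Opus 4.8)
The plan is to identify the three listed transformations with conjugation by the three elementary types of matrices that together generate the group $G$ of all matrices of the form \eqref{fem}, and then to read off both directions from this. First I would record that $G$ is indeed a group under multiplication (a product of two matrices $\matt{S_{11}&0\\S_{21}&S_{22}}$ again has unitary $(1,1)$ block, nonsingular $(2,2)$ block, and zero $(1,2)$ block), and that every such $S$ factors as
\[
\matt{S_{11}&0\\S_{21}&S_{22}}=\matt{I&0\\Y&I}\matt{U&0\\0&I}\matt{I&0\\0&T},\qquad Y:=S_{21}S_{11}^{-1},\ U:=S_{11},\ T:=S_{22}.
\]
Thus $G$ is generated by the three subgroups consisting of $\matt{U&0\\0&I}$ ($U$ unitary), $\matt{I&0\\0&T}$ ($T$ nonsingular), and $\matt{I&0\\Y&I}$ ($Y$ arbitrary).

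Next I would verify by direct block multiplication that conjugation $A\mapsto S^{-1}AS$ by each generator preserves the block-lower-triangular \kv form $\matt{B&0\\C&D}$ (in particular keeps the upper-right block $0$) and realizes exactly one of the listed moves. Conjugation by $\matt{U&0\\0&I}$ gives $\matt{U^{*}BU&0\\CU&D}$, which is transformation (i). Conjugation by $\matt{I&0\\0&T}$ gives $\matt{B&0\\T^{-1}C&T^{-1}DT}$; writing $T^{-1}$ as a product of elementary matrices realizes this by a sequence of transformations (ii). Conjugation by $\matt{I&0\\Y&I}$ fixes $B$ and $D$ and sends $C$ to $C-YB+DY$; taking $Y=-cE_{ji}$ (with $E_{ji}$ the matrix unit having its only nonzero entry, equal to $1$, in position $(j,i)$) this is precisely transformation (iii) with parameter $c$ on rows/columns $i,j$, and since $B,D$ stay fixed the single-entry effects simply add, so an arbitrary $Y$ is obtained by a sequence of transformations (iii).

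With these identifications both implications follow. For the ``if'' direction, each transformation is conjugation by an element of $G$, and because $G$ is a group the composite of a finite sequence of them is conjugation by a single $S\in G$; hence if $A$ is carried to $A'$ by such a sequence, then $A'=S^{-1}AS$ with $S\in G$, i.e.\ $A$ and $A'$ are semiunitarily similar. For the ``only if'' direction, given $A'=S^{-1}AS$ with $S\in G$, I would apply the conjugations coming from the factorization one factor at a time, reading from the inside out: first a sequence of transformations (iii) (for the factor $\matt{I&0\\Y&I}$), then one transformation (i) (for $\matt{U&0\\0&I}$), then a sequence of transformations (ii) (for $\matt{I&0\\0&T}$). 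Since every intermediate matrix stays in \kv form, the concatenated sequence is legitimate and carries $A$ to $A'$.

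The computations here are routine, so the only place demanding genuine care — and where I would most expect a sign slip — is matching each paired ``row operation followed by the inverse column operation on the complementary strip'' with an honest two-sided conjugation: it is exactly the pairing that upgrades a one-sided multiplication $S^{-1}A$ to $S^{-1}AS$ while simultaneously keeping the upper-right block equal to $0$. I would therefore carry out each of the three correspondences by explicit $2\times2$ block multiplication, track the signs in the two halves of transformation (iii) especially carefully, and confirm at each stage that the matrix remains block-lower-triangular so that the following transformation is applicable.
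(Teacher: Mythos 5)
Your proof is correct and follows essentially the same route as the paper's: the paper likewise identifies each of (i)--(iii) with conjugation by a generator of the form $\matt{R_1&0\\0&I}$, $\matt{I&0\\0&R_2}$, or $\matt{I&0\\R_3&I}$ and then invokes the fact that every nonsingular matrix of the form \eqref{fem} factors into such generators. Your write-up merely makes explicit (via the factorization $S=\matt{I&0\\Y&I}\matt{U&0\\0&I}\matt{I&0\\0&T}$ and the block computations, including the sign check $C\mapsto C-YB+DY$ with $Y=-cE_{ji}$) what the paper leaves to the reader.
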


\begin{proof}
Each transformation (i), (ii), or (iii)  is a transformation $A\mapsto
R^{-1}AR$ with $R$ of the form
       \begin{equation}           \label{2.6}
\begin{bmatrix}
 R_{1}&0 \\ 0&I
\end{bmatrix},\quad
\begin{bmatrix}
 I&0 \\ 0&R_{2}
\end{bmatrix},\quad\text{or}\quad
\begin{bmatrix}
 I&0 \\ R_{3}&I
\end{bmatrix},
       \end{equation}
respectively, in which $R_{1}$ is a unitary matrix, $R_{2}$ is an elementary matrix, and $R_{3}$ is a matrix with only one nonzero entry. Each nonsingular matrix $S$ of the form \eqref{fem} is a product
of matrices of the form \eqref{2.6}, and so the transformation $A\mapsto S^{-1}AS$ is a sequence of transformations of types (i)--(iii).
\end{proof}

\section{Proof of Theorem \ref{t2.2}}\label{hhgy}

Let $\cal A$ be an indecomposable bounded
operator on a semiunitary space $U$ and $0\ne U_0\ne U$. Let $\lambda $ be an eigenvalue of $\mc A_0$. There is a semiorthonormal basis, in which the \kv matrix $A$ of $\mc A$ has the form  \eqref{fri} with $\mu_1=\lambda $.

\subsection{Proof of Theorem \ref{t2.2}(a)}

If $\lambda $ is not an eigenvalue of
${\cal A}_1$, then $C_{11},\dots,C_{1s}$ are zero by  Lemma \ref{htp}, in this case $J_{m_1}(\mu_1)$ is a block-direct summand. Since $\mc A$ is indecomposable, $A=J_{m_1}(\mu_1)$, hence $U=U_0$, which contradicts the condition  $0\ne U_0\ne U$.
Therefore, $\lambda $ is an eigenvalue of
${\cal A}_1$. By Lemma \ref{htp}, we can take $\lambda_{s}=\lambda $.

\subsection{Proof of Theorem \ref{t2.2}(b)}

Replacing $\mc A$ by $\mc A- \lambda \mathds{1}$, we make $\lambda =0$. Then $B_0:=B_{ss}$ has the eigenvalue $0$ and the other $B_{ii}$ are nonsingular. Using transformation (ii) from Lemma \ref{feo}, we reduce $A$  to the form
\begin{equation}\label{111a}
A=\begin{MAT}(@){4cc4c.cc.ccc.c1c4}
 \first4
B_2&      0&0&0&0&0&0&0&\dots&0\\
B_1&B_0&0&0&0&0&0&0&\dots&0\\4
E_{11}&F_{11}&0_{n_1}&&&&&&0\!\!\!\!&0\\.
E_{21}&F_{21}&&0_{n_2}&0&&&&&0\\
E_{22}&F_{22}&&I_{n_2}&0_{n_2}&&&&&0\\.
E_{31}&F_{31}&&&&0_{n_3}&0&0&&0\\
E_{32}&F_{32}&&&&I_{n_3}&0_{n_3}&0&&0\\
E_{33}&F_{33}&&&&0&I_{n_3}&0_{n_3}&&0\\.
\smash{\vdots}&\smash{\vdots}&0&&&
       &&&\smash{\ddots}&\smash{\vdots}\\1
*&*&0&0&0&0&0&0&\dots&K\\4
\end{MAT}
\end{equation}
in which $B_2$ and $K$ are nonsingular, $B_0$ has only the eigenvalue $0$, and $n_1,n_2,\ldots\in\{0,1,2,\dots\}$ (if $n_i=0$, then $J_i(0_{n_i})$ is absent).

If there exists  $F_{ij}\ne 0$ with $j\ge 2$, then let $F_{pq}$ be the first nonzero block in the sequence
\begin{equation}\label{bhn}
F_{22},\ F_{33},\ F_{32},\ F_{44},\ F_{43},\ F_{42},\ \dots\,.
\end{equation}
We make $F_{pq}=0$ by adding linear combinations of columns of $I_{n_p}$ to columns of $F_{pq}$ (transformation (iii) from Lemma \ref{feo}); these transformations and
the inverse row-transformations do not change the blocks that precede  $F_{pq}$ in \eqref{bhn}.
We repeat this reduction until we obtain $F_{i2}=F_{i3}=\dots=F_{ii}=0$ for all $i$.

If there exists  $E_{ij}\ne 0$, then let $E_{pq}$  be the first nonzero block in the sequence
\begin{equation}\label{bhd}
E_{11},\ E_{21},\ E_{22},\ E_{31},\ E_{32},\ E_{33},\ \dots\,.
\end{equation}
Adding linear combinations of rows of $B_2$ to rows of $E_{pq}$, we make $E_{pq}=0$; these transformations and the inverse transformations of columns do not change the blocks that precede  $E_{pq}$ in \eqref{bhd}.
We repeat this reduction until we make $E_{ij}=0$ for all $i,j$ and obtain
\begin{equation}\label{111}
A=\begin{MAT}(@){4cc4c.cc.ccc.c1c4}
 \first4
B_2&      0&0&0&0&0&0&0&\dots&0\\
B_1&B_0&0&0&0&0&0&0&\dots&0\\4
0&F_{11}&0_{n_1}&&&&&&0\!\!\!\!&0\\.
0&F_{21}&&0_{n_2}&0&&&&&0\\
0&0&&I_{n_2}&0_{n_2}&&&&&0\\.
0&F_{31}&&&&0_{n_3}&0&0&&0\\
0&0&&&&I_{n_3}&0_{n_3}&0&&0\\
0&0&&&&0&I_{n_3}&0_{n_3}&&0\\.
\smash{\vdots}&\smash{\vdots}&0&&&
       &&&\smash{\ddots}&\smash{\vdots}\\1
*&*&0&0&0&0&0&0&\dots&K\\4
\end{MAT}\,.
\end{equation}

A linear combination of rows of $F_{i1}$ with $i>1$ can be added to any row of $F_{11}$; the inverse transformation of columns do not change $A$. A linear combination of rows of $F_{31}$ can be added  to any row of $F_{21}$; the inverse transformation of columns spoils the second block to the right of $I_{n_2}$; we restore it adding linear combinations of rows of $I_{n_3}$; the inverse transformation of columns do not change $A$. In a similar way,
\begin{equation}\label{gfv}
\parbox[c]{0.84\textwidth}{a linear combination of rows of $F_{i1}$ can be added  to any row of $F_{j1}$ with $j<i$. }
\end{equation}

Let us
consider the matrix
\begin{equation}\label{wxo}
F:=\mat{F_{11}\\F_{21}\\\vdots\\F_{t1}}
\end{equation}
formed by all blocks $F_{i1}$. Let $p\times q$ be the size of $F$.
If its rows are linearly dependent, then we make a zero row by transformations \eqref{gfv}, which is impossible since $\cal A$ is indecomposable and $U_0\ne U$. Hence, the rows of $F$ are linearly independent, and so $p\le q$. However, $p$ is the geometric multiplicity of the eigenvalue $0$ in ${\cal A}_0$ and $q$ is the algebraic multiplicity of $0$ in
${\cal A}_1$, which proves Theorem \ref{t2.2}(b).

\subsection{Proof of Theorem \ref{t2.2}(c)}

Let $\mc A$ be given by the matrix
\eqref{111a}, in which $B_2$ and $K$ are nonsingular and $B_0$ has only the eigenvalue zero. By transformations of unitary similarity, we reduce $B_0$ to the form $[B_3\,0]$, in which the columns of $B_3$ are linearly independent. Then we join to $K$ all blocks $J_i(0_{n_i})$ with the exception of one block $J_k(0_{n_k})$
with $n_k\ne 0$ and obtain
\begin{equation*}\label{133}
A=\begin{MAT}(@){4ccc4ccccc1c4}
 \first4
B_2&0&0 & 0&0&0&\dots&0&0\\
B_1&B_3&0 & 0&0&0&\dots&0&0\\4
E_1&G_1&H_1& 0_{n_k}&&&&0&0\\
E_2&G_2&H_2& I_{n_k}&0_{n_k}&&&&0\\
E_3&G_3&H_3& &I_{n_k}&0_{n_k}&&&0\\
\smash{\vdots}&\smash{\vdots}&\smash{\vdots}& &&\smash{\ddots}&\smash{\ddots}&\phantom{0}&\smash{\vdots}\\
E_k&G_k&H_k&0 &&&I_{n_k}&0_{n_k}&0\\1
*&*&*&0 &0&0&\dots&0&M\\4
\end{MAT}
\end{equation*}
in which $M$ does not contain the direct summands $J_k(0)$.

We make zero all blocks $E_i$ and $G_i$ by transformations (iii) from Lemma \ref{feo} as follows.
If the first nonzero block in the sequence
\begin{equation}\label{bhi}
G_{1},\ E_{1},\ G_{2},\ E_{2},\ G_{3},\ E_{3},\ \dots
\end{equation}
is $G_i$, then we make $G_i=0$ by adding linear
combinations of rows that cross $B_3$. If the first nonzero block in \eqref{bhi} is $E_i$, then we made $E_i=0$ by adding linear combinations of rows that cross $B_2$. All preceding blocks in  \eqref{bhi} remain zero under these transformations and the inverse transformations.

We reduce $H_1,H_2,\dots,H_k$ by transformations (ii)  from Lemma \ref{feo} as follows. If
\[
R:=\mat{R_1&&&0\\R_2&R_1
\\[-2mm]\ddots&\ddots&\ddots\\[-2mm]
R_k&\ddots&R_2&R_1},
\]
in which all blocks are $n_k\times n_k$ and $R_1$ is nonsingular, then $RJ_k(0_{n_k})R^{-1}=J_k(0_{n_k})$, and so we can reduce
\[
H:=\mat{H_1\\[-2mm]\vdots\\H_k}
\]
by transformations $RH$. Therefore, we can reduce $H$ by the following transformations:
\begin{itemize}
  \item[($\alpha$)]  Simultaneous elementary transformations of rows in all $H_i$.

  \item[($\beta$)] For any $c\in\cc$, $q\ne 0$,  $i$, and $j$, adding row $i$ of $H_p$ multiplied by $c$ to row $j$ of $H_{p+q}$ simultaneously for all $p=1,2,\dots, k-q$.

\item[($\gamma$)] Elementary transformations of columns of $H$.
\end{itemize}

We make
\[
H_1=\mat{I_{r_1}&0\\0&0}
\]
by transformations ($\alpha$) and ($\gamma $).

We
partition $H_2=\matt{H_{21}\\H_{22}}$ so that $H_{21}$ has $r_1$ rows, make zeros in $H_{22}$ under $I_{r_1}$, reduce the remaining part of $H_{22}$ to $I_{r_2}\oplus 0$, and obtain
\[
H_2=\left[\begin{MAT}{c}
  H_{21}\\.
\begin{MAT}{c.c}
  0_{n_k-r_1,r_1}&
\ma{I_{r_2}&0\\0&0\\[-2mm]}
  \\
\end{MAT}
  \\
\end{MAT}\right],\quad H_{21}\text{ is }r_1\times h,
\]
in which $h$ is the number of columns in $H$.

We partition  $H_3=\matt{H_{31}\\H_{32}}$ so that $H_{31}$ has $r_1+r_2$ rows, make zeros in $H_{32}$ under $I_{r_1}$ and $I_{r_2}$, reduce the remaining part of $H_{22}$ to $I_{r_3}\oplus 0$, and obtain
\[
H_3=\left[\begin{MAT}{c}
  H_{31}\\.
\begin{MAT}{c.c}
  0_{n_k-r_1-r_2,r_1+r_2}&
\ma{I_{r_3}&0\\0&0\\[-2mm]}
  \\
\end{MAT}
  \\
\end{MAT}\right],\quad H_{31}\text{ is }(r_1+r_2)\times h.
\]

We repeat this reduction
until we obtain
\[
H_k=\left[\begin{MAT}{c}
  H_{k1}\\.
\begin{MAT}{c.c}
  0_{n_k-r_1-\dots-r_{k-1},r_1+\dots+r_{k-1}}&
\ma{I_{r_k}&0\\0&0\\[-2mm]}
  \\
\end{MAT}
  \\
\end{MAT}\right],\quad H_{k1}\text{ is }(r_1+\dots+r_{k-1})\times h.
\]

If the last row of $H_k$ is zero, then the last row is
zero in all $H_i$, which contradicts the indecomposability of
$\cal A$ and $U_0\ne U$. Hence  $n_k=r_1+\dots+r_k$, which is the
number of blocks
$J_k(0)$ in the Jordan canonical form of $\mc A_0$. This proves Theorem \ref{t2.2}(c) since $n_k\le h$ and $h$ is the geometric multiplicity of 0 in $\mc A_1$.

\section{Proof of Theorem \ref{ttt}}\label{v2v}

\subsection{Proof of Theorem \ref{ttt}(a)}

It suffices to show that for each indecomposable seminormal operator there exists an orthonormal basis in which its \kv matrix  has the form \eqref{3.1}; the uniqueness of the block-direct sum follows from \eqref{frb}.

Let  ${\cal A}$ be an indecomposable seminormal operator on a semiunitary space $U$. There is a semiorthonormal basis, in which its \kv matrix has the form
\eqref{fri}. Since ${\cal A}_1$ is normal, we can take all $B_{ii}={\lambda}_iI$ and $B_{ij}=0$
if $i\ne j$. Since $\cal A$ is indecomposable and \eqref{MJX} holds, $A$ has a single eigenvalue $\lambda $.

If $U_0=0$, then $A=[\lambda]_1$. If
$U_0=U$, then $A=[J_n(\lambda)]_0$.

Let $0\ne U_0\ne U$. It is sufficient to consider the case  $\lambda =0$.  There is a semiorthonormal basis, in which the \kv matrix of $\mc A$ has the form \eqref{111} with $B_0=0$ and without $B_2$ and $K$.
Its submatrix \eqref{wxo} has linearly independent rows; it is reduced by elementary column-transformations, elementary row-transformations within each $F_{i1}$, and  the transformations \eqref{gfv}. Using these transformations, we reduce $F_{t1}$ to the form
$[I\,0\,\dots,\,0]$, and then make zero all the entries of $F$ over $I$. Since $\mc A$ is indecomposable, $F=F_{t1}=[1]$, and so $A=[J_t(0)]_1$.

\subsection{Proof of  Theorem \ref{ttt}(b)}

Let $\mc F:U\times U\to \cc$ be a sesquilinear form on a semiunitary space $U$. Let $F=\matt{A&B\\C&D}$ be its \kv matrix in a semiorthonormal basis. Changing the basis, we can reduce $F$ by transformations of \emph{semiunitary *congruence} $S^*FS$, in which $S$ is a nonsingular matrix of the form \eqref{fem}.

\begin{lemma}   \label{l3.2}
A  \kv matrix $F=\matt{A&B\\C&D}$ is semiunitarily *congruent  to a  \kv matrix $F'=\matt{A'&B'\\C'&D'}$ if and only if $F$ is reduced to $F'$ by a sequence of the following transformations:
\begin{itemize}
\item [\rm(i)] a unitary row-transformation in $[A\,B]$,
then the *congruent column-transformation in
$\matt{A\\C}$ (thus, $A$ is reduced by
transformations of unitary *congruence);

\item [\rm(ii)] an elementary row-transformation in
$[C\,D]$, then the *congruent
column-transformation in $\matt{B\\D}$ (thus, $D$ is reduced by transformations of *congruence);

\item[\rm(iii)] adding  row $i$ of $[C\,D]$ multiplied by $c\in\cc$
to row $j$ of $[A\,B]$, then adding  column $i$
of $\matt{B\\D}$ multiplied by $\bar c$ to column $j$ of $\matt{A\\C}$.
\end{itemize}
\end{lemma}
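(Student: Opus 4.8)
The plan is to follow the proof of Lemma~\ref{feo} almost verbatim, replacing the similarity transformation $R^{-1}AR$ by the *congruence transformation $R^*FR$ throughout. The only structural change is that the conjugate transpose $R^*$ now plays the role of the inverse $R^{-1}$; consequently the column-transformation attached to each elementary step becomes \emph{*congruent} to the row-transformation rather than inverse to it, and this is precisely what produces the conjugation $\bar c$ in transformation~(iii).

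First I would verify that each of (i)--(iii) is realized as $F\mapsto R^*FR$ for a matrix $R$ of the corresponding shape in~\eqref{2.6}. For (i) take $R=\matt{R_1&0\\0&I}$ with $R_1$ unitary: since $R_1^*=R_1^{-1}$, the map performs the unitary row-transformation $R_1^*$ on $[A\,B]$ together with the *congruent column-transformation $R_1$ on $\matt{A\\C}$, so $A$ is reduced by unitary *congruence. For (ii) take $R=\matt{I&0\\0&R_2}$ with $R_2$ elementary: then $R^*FR$ applies the row-transformation $R_2^*$ to $[C\,D]$ and the *congruent column-transformation $R_2$ to $\matt{B\\D}$, so $D$ is reduced by *congruence. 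For (iii) take $R=\matt{I&0\\R_3&I}$ with $R_3$ having a single nonzero entry: expanding $R^*FR$ shows that adding $c$ times a row of $[C\,D]$ to a row of $[A\,B]$ (coming from $R^*$) is paired with adding $\bar c$ times the matching column of $\matt{B\\D}$ to a column of $\matt{A\\C}$ (coming from $R$), which is exactly the stated transformation.

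For the remaining implication I would reuse the factorization already invoked in Lemma~\ref{feo}: every nonsingular $S$ of the form~\eqref{fem} is a product of elementary matrices of the three shapes in~\eqref{2.6}, and this factorization is insensitive to whether one conjugates or inverts. Since *congruence is multiplicative, $(S_1S_2)^*F(S_1S_2)=S_2^*\bigl(S_1^*FS_1\bigr)S_2$, a single transformation $F\mapsto S^*FS$ splits into a finite sequence of steps of types (i)--(iii), and conversely every such sequence is a semiunitary *congruence. This yields both directions of the lemma.

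The one point I expect to require care is the index bookkeeping in~(iii): one must check that placing $c$ in the $(j,m+i)$ entry of $R^*$ forces $\bar c$ in the $(m+i,j)$ entry of $R$, so that the row operation $[C\,D]\to[A\,B]$ is matched by the conjugated column operation $\matt{B\\D}\to\matt{A\\C}$ with coefficient $\bar c$ rather than $c$. This is routine once the positions are written out, and it is the sole place where the passage from similarity to *congruence becomes visible.
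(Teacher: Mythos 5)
Your proposal is correct and takes essentially the same approach as the paper: the paper's entire proof of this lemma is the observation that each nonsingular matrix $S$ of the form \eqref{fem} is a product of matrices of the form \eqref{2.6}, so that a semiunitary *congruence $F\mapsto S^*FS$ decomposes into steps of types (i)--(iii) and conversely. Your explicit verification that each shape in \eqref{2.6} realizes one of (i)--(iii) under $R^*FR$, including the bookkeeping that forces the coefficient $\bar c$ in (iii), merely spells out details the paper leaves implicit.
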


\begin{proof}
This lemma follows from the fact that each nonsingular matrix $S$ of the form \eqref{fem} is a product
of matrices of the form \eqref{2.6}.
\end{proof}

A \kv matrix is \emph{indecomposable for semiunitary *congruence} if it is not semiunitarily  *congruent to a block-direct sum of \kv matrices of smaller sizes. By analogy with the proof of \eqref{frb}, we apply \cite[Theorem 2]{ser_izv} to a pair consisting of the Hermitian form $\l \cdot,\cdot\r$ and a sesquilinear form and obtain that
\begin{equation}\label{mhm}
\parbox[c]{0.84\textwidth}
{each \kv matrix  is semiunitarily *congruent to a block-direct sum of  \kv matrices that are indecomposable for semiunitary *congruence, and this sum is uniquely determined, up to semiunitary *congruence of summands.}
\end{equation}
Let $F=\matt{A&B\\B^*&D}$, in which  $A^*=A$  and $D^*=D$,
be a Hermitian \kv matrix that is indecomposable with respect to semiunitary *congruence. Due to \eqref{mhm}, it suffices to prove that $F$ is semiunitarily *congruent to exactly one \kv matrix of the form \eqref{3.2}.

We reduce $D$ to the form $I\oplus
(-I)\oplus 0$ by transformations (ii) from Lemma \ref{l3.2}.
Make zero the entries of $B$ over $I$ and $-I$ by transformations (iii).

If $D\ne 0$, then $F$ is $[1]_0$ or $[-1]_0$ since
$F$ is indecomposable for semiunitary *congruence.

Let $D=0$. We reduce $B$ to the form $I\oplus 0$ by transformations (i) and (ii), partition $A$, and obtain
\[
F=\arrr{A}{B}{B^*}{D}=\left[
\begin{array}{cc|cc}
X&Y&I&0\\Y^*&Z&0&0\\\hline I&0&0&0\\0&0&0&0\end{array} \right],\quad
\text{in which }X^*=X,\ Z^*=Z.
\]
Make $Y=0$ and $X=0$ by additions of linear combinations of columns of $\matt{B\\0}$ (transformations (iii)). Then reduce $Z$ to a real diagonal matrix by transformations (i). Since $F$ is indecomposable for semiunitary *congruence, it is $\matt{0&1\\1&0}_1$, $[\lambda]_1$ $(\lambda\in{\mathbb R})$, or $[0]_0$,
which proves Theorem \ref{ttt}(d).

\section{An algorithm for reducing to canonical form of matrices of systems of linear mappings on semiunitary
spaces}
\label{z3v}

\subsection{Semiunitary representations of quivers}\label{n4d}

Let $\un m=(m_1,\dots,m_l)$ and
$\un{n}=(n_1,\dots,n_r)$ be two sequences of \emph{nonnegative} integers. An \emph{$\un{m}\times\un{n}$
matrix} is a block matrix $M=[M_{ij}]_{i=1}^l{}_{j=1}^r$, in which every block $M_{ij}$ is $m_i\times n_j$ (thus, some horizontal and vertical strips can be empty).

A \emph{quiver} is a directed graph, in which loops and multiple edges are allowed. Its  \emph{semiunitary representation} is given by assigning a  semiunitary space to each vertex and a linear mapping of the corresponding spaces to each arrow.

\begin{example}\label{vtj}
Let us consider the quiver $Q$ and its semiunitary representation $\mc R$:
\begin{equation}\label{kshd}
\begin{split}
Q:\hspace{-30pt}
\xymatrix@R=2pc@C=1pc{
 &{u}&\\
 {v}\ar@(ul,dl)@{->}_{\gamma }
 \ar@{->}[ur]^{\alpha}
  \ar@/^/@{->}[rr]^{\delta }
 \ar@/_/@{->}[rr]_{\varepsilon} &&{w}
 \ar@{->}[ul]_{\beta}
 \ar@{->}@(u,ur)^{\zeta}
 \ar@{<-}@(dr,r)_{\eta}
 }
\qquad\qquad
\mc R:\hspace{-30pt}
\xymatrix@R=2pc@C=1pc{
 &{U}&\\
  \save
!<-1.5mm,0cm>
 {V}\ar@(ul,dl)@{->}_{\mc C}
 \restore
 \ar@{->}[ur]^{\mc A}
  \ar@/^/@{->}[rr]^{\mc D}
 \ar@/_/@{->}[rr]_{\mc E} &&{W}
 \ar@{->}[ul]_{\mc B}
 \ar@{->}@(u,ur)^{\mc F}
 \ar@{<-}@(dr,r)_{\mc G}
 }
\end{split}
\end{equation}
in which $U,V,W$ are semiunitary spaces and $\cal A,B,\dots,G$ are linear mappings. Choosing semiorthonormal bases in the spaces $U,V,W$, we  give these mappings by their matrices $A,B,\dots,G$. Changing the bases, we can reduce them by transformations
\begin{equation}\label{ksd2}
\begin{split}
{\xymatrix@=3pc{
 &{u}&\\
   \save
!<0mm,0cm>
 {v}\ar@(ul,dl)
 @{->}_{S_v^{-1}CS_v}
 \restore
 \ar@{->}[ur]^{ S_u^{-1}AS_v}
  \ar@/^/@{->}[rr]^{S_w^{-1}DS_v}
 \ar@/_/@{->}[rr]_{ S_w^{-1}ES_v} &&{w}
 \ar@{->}[ul]_{ S_u^{-1}BS_w}
 \ar@{->}@(u,ur)^(.7){S_w^{-1}FS_w}
 \ar@{<-}@(dr,r)_{S_w^{-1}GS_w}
 }}\end{split}
\end{equation}
where $S_u,S_v,S_w$ are the transition matrices of the form \eqref{fem}. Therefore, the problem of classifying semiunitary representations of the quiver $Q$ is the problem of classifying matrix tuples  $(A,B,\dots,G)$ up to transformations \eqref{ksd2} given by nonsingular matrices of the form \eqref{fem}.
\end{example}

\subsection{Transformations of $(\Lambda,\mc U)$-similarity}

Recall that a \emph{matrix algebra} is a vector space of matrices of the same size that contains the identity matrix and is closed
with respect to multiplication.

The following definition of a reduced algebra is given in \cite[Definition 1.1]{ser_can} over an arbitrary field. The term ``reduced'' is used because  for  every  algebra $\Xi\subset\cc^{n\times n}$ there  exists  a  nonsingular
matrix $P\in\cc^{n\times n}$ such that $P^{-1}\Xi P$ is a reduced matrix algebra (see \cite[Theorem 1.1]{ser_can}).

\begin{definition} \label{d1.1}
{\rm(a)} A \emph{basic reduced algebra} $\Gamma$ over $\cc$ is an algebra of $t\times t$ upper triangular complex matrices satisfying the following condition:
\begin{itemize}
\item there exists an equivalence relation
\begin{equation}  \label{0}
\approx\ \ \text{in } T=\{1,\dots,t\},
\end{equation}

\item for each pair ${\cal I,J}\in
T/\!\approx$ of equivalence
classes, there exists a system of linear equations
\begin{equation}       \label{00}
\sum_{{\cal I} \ni i < j \in {\cal J}}
c_{ij}^{(1)}x_{ij} = 0,\
\sum_{{\cal I} \ni i < j \in {\cal J}}
c_{ij}^{(2)}x_{ij} = 0,\
\dots,\
\sum_{{\cal I} \ni i < j \in {\cal J}}
c_{ij}^{(r_{_{\cal IJ}})}x_{ij} = 0,
\end{equation}
in which all $c_{ij}^{(l)} \in \cc$
and $r_{_{\cal IJ}} \geq 0$,
\end{itemize}
such that $\Gamma$ consists of all upper triangular
complex matrices
\begin{equation*}         \label{1zz}
   S=\mat{s_{11}&s_{12}&\cdots& s_{1t}\\
&s_{22}&\ddots&\vdots\\ &&\ddots&s_{t-1,t}\\
 0 &&& s_{tt}},
\end{equation*}
in which all diagonal entries satisfy the condition
\begin{equation*}         \label{2zz}
s_{ii} = s_{jj} \qquad \text{if } i \approx
j\,,
\end{equation*}
and all entries above the diagonal satisfy the system of equalities
 \begin{equation}         \label{3zz}
\sum_{{\cal I} \ni i < j \in {\cal J}}
c_{ij}^{(1)}s_{ij} = 0,
\sum_{{\cal I} \ni i < j \in {\cal J}}
c_{ij}^{(2)}s_{ij} = 0,\
\dots,
\sum_{{\cal I} \ni i < j \in {\cal J}}
c_{ij}^{(r_{_{\cal IJ}})}s_{ij} = 0
\end{equation}
for each pair ${\cal I,J}\in
T/\!\approx$.

{\rm(b)} Let $\Gamma$ be a basic reduced algebra over $\cc$, and let $\un n=(n_1,\dots,n_t)$ be a sequence of nonnegative integers such that $n_{i} = n_{j}$ if $i \approx
j$. A \emph{reduced
algebra $\Lambda:=\Gamma^{\un n \times\un n}$} is the algebra of $\un n \times\un n$ matrices that consists of all
upper block triangular
$\underline{n}\times\underline{n}$
matrices
\begin{equation}         \label{1}
   S=\begin{bmatrix}
S_{11}&S_{12}&\cdots& S_{1t}\\
&S_{22}&\ddots&\vdots\\ &&\ddots&S_{t-1,t}\\
 0 &&& S_{tt}  \end{bmatrix},\qquad\text{each }
   S_{ij}\text{ is } {n_i\times n_j},
\end{equation}
in which all diagonal blocks satisfy the
condition
\begin{equation}         \label{2}
S_{ii} = S_{jj} \qquad \text{if } i \approx
j\,,
\end{equation}
and  all blocks above the diagonal satisfy the system of equalities
 \begin{equation}         \label{3}
\sum_{{\cal I} \ni i < j \in {\cal J}}
c_{ij}^{(1)}S_{ij} = 0,
\sum_{{\cal I} \ni i < j \in {\cal J}}
c_{ij}^{(2)}S_{ij} = 0,\
\dots,
\sum_{{\cal I} \ni i < j \in {\cal J}}
c_{ij}^{(r_{_{\cal IJ}})}S_{ij} = 0
\end{equation}
for each pair ${\cal I,J}\in
T/\!\approx$.
\end{definition}

Thus, $\Lambda$ as a complex vector space is a direct sum of vector spaces
\begin{equation}\label{omo}
\Lambda=\Delta\oplus\Upsilon,
\end{equation}
in which $\Delta$ consists of block diagonal martices $S_{11}\oplus S_{22}\oplus\dots \oplus S_{tt}$ and $\Upsilon$
consists of block triangular matrices with zero block diagonal.

Let $\Lambda$ be a reduced algebra of $\un n\times\un n$ matrices, and let ${\cal U} \subset T$ be closed under the relation $\approx$ from \eqref{0}; that is, $i\in{\cal U}$ and $i\approx j$ imply $j\in{\cal U}$. Denote
by $\mc G(\Lambda, \mc U)$ the group of all nonsingular ${\un n\times\un n}$
matrices $S\in \Lambda$  satisfying the following condition:
\begin{equation*}                  \label{5.4a}
S_{ii}\quad \emph{is unitary if } i\in{\cal U}.
\end{equation*}

If all $n_i$ in $\un n=(n_1,\dots,n_t)$ are nonzero, then
\begin{equation*}
\label{bbv}
\parbox[c]{0.84\textwidth}{the algebra $\Lambda$, its partition $\un n\times\un n$, the equivalence relation \eqref{0}, and the set $\mc U$ are uniquely determined by the group $\mc G(\Lambda, \mc U)$ due to the structure of its matrices.
}
\end{equation*}
We say that two matrices $M$ and $N$ are
\emph{$(\Lambda,\mc U)$-similar} and write $M \sim_{(\Lambda,\mc U)} N$ if
there exists $S \in \mathscr G(\Lambda,\mc U )$ such that $S^{-1}MS = N$.

\subsection{The problem of classifying matrices up to $(\Lambda,\mc U)$-similarity contains the problem of classifying semiunitary representations of quivers}

A classification problem $\mc M$ \emph{contains} a classification problem $\mc N$ if a solution of $\mc N$ can be obtained from a solution of $\mc M$ (a formal definition is given in \cite[p.\,205]{bel-ser_comp}).

\begin{lemma}\label{llx}
For each quiver $Q$, there exist a basic reduced algebra $\Gamma$ and a set ${\cal U} \subset T$ closed under the equivalence \eqref{0} such that for each $\un n$ the problem of classifying ${\un n\times\un n}$ matrices up to $(\Gamma^{\un n\times\un n},\mc U)$-similarity contains the problem of classifying semiunitary representations of $Q$.
\end{lemma}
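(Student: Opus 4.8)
The plan is to encode an arbitrary semiunitary representation of a quiver $Q$ as a single block matrix whose admissible basis changes — transformations of the form \eqref{ksd2} by transition matrices of the shape \eqref{fem} — are exactly the $(\Lambda,\mc U)$-similarities for a suitably built reduced algebra $\Lambda=\Gamma^{\un n\times\un n}$. The starting observation is that a representation assigns to each vertex a semiunitary space, and by Lemma \ref{smc} each such space carries a transition group of matrices of the form \eqref{fem}: block lower triangular with a \emph{unitary} top-left block (the $m$-orthonormal part) and an arbitrary nonsingular bottom-right block (the isotropic part). The whole representation is thus acted upon by a product of such groups, one factor per vertex. The goal is to realize this product group, together with its unitarity constraints, as a single $\mc G(\Lambda,\mc U)$.

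First I would lay out the matrices $A,B,\dots,G$ of the arrows as off-diagonal blocks of one large square matrix $M$ of size $\un n\times\un n$, following the standard trick (as in \cite{ser_can}) for turning a quiver representation into a single operator acting by conjugation. Each vertex $v$ contributes a pair of indices in $T=\{1,\dots,t\}$ — one for its $m_v$-orthonormal part and one for its isotropic part — and each arrow from $v$ to $w$ places its matrix in the block indexed by the source and target strips of $v$ and $w$. The transition matrix $S_v$ of the form \eqref{fem} at each vertex becomes a diagonal block $S_{vv}$ of the conjugating matrix; gathering these across all vertices builds the conjugating matrix $S$ acting simultaneously as $S^{-1}MS$. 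Because \eqref{fem} is itself block lower (or, after an index reordering putting $m$-parts before isotropic parts, upper) triangular, the set of all such $S$ is closed under multiplication and inversion and contains the identity, so it generates a reduced algebra in the sense of Definition \ref{d1.1}.

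Next I would match the algebra axioms to the structure of \eqref{fem}. The equivalence relation \eqref{0} is defined so that the diagonal blocks forced to be equal are precisely those dictated by the identifications $S_u^{-1}(\cdot)S_u$, $S_v^{-1}(\cdot)S_v$, etc.\ — a block appearing on both sides of an arrow's transformation must come from the same vertex's transition matrix, so the indices belonging to one vertex are grouped by $\approx$. The homogeneous linear conditions \eqref{3} are chosen to kill exactly the blocks that \eqref{fem} forbids, namely the top-right $S_{12}=0$ block (no coupling from the isotropic part back into the $m$-orthonormal part). The set $\mc U$ is taken to be the union of all indices labelling the $m$-orthonormal strips of the vertices; this is closed under $\approx$ by construction, and the constraint ``$S_{ii}$ unitary for $i\in\mc U$'' is precisely the unitarity of the top-left block $S_{11}$ in \eqref{fem}. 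With these choices, $\mc G(\Lambda,\mc U)$ coincides with the product of the per-vertex transition groups, and $(\Lambda,\mc U)$-similarity of the big matrices $M,N$ is equivalent to isomorphism of the corresponding representations.

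The step I expect to be the main obstacle is verifying that $\Gamma$ so constructed is genuinely a \emph{basic reduced algebra} — that the relation $\approx$ and the linear systems \eqref{00} can be arranged to be mutually consistent (closure under multiplication forces compatibility between the off-diagonal linear conditions and the diagonal equalities), and simultaneously that the conjugation $S^{-1}MS$ restricted to the arrow-blocks reproduces \emph{exactly} the transformations \eqref{ksd2} with no spurious extra freedom and no missing transformations. Concretely, one must check that conjugating the large $M$ by a diagonal-block $S$ induces $S_w^{-1}(\text{arrow})S_v$ on each arrow block while leaving the formal identity structure intact, and that the off-diagonal (strictly upper) part $\Upsilon$ in the decomposition \eqref{omo} does not contribute unwanted mixing. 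Establishing this bookkeeping rigorously — that the ``contains'' relation holds in the precise sense of \cite{bel-ser_comp}, i.e.\ that a classification of representations is recoverable from a classification of the $M$'s — is routine in spirit but demands care in indexing; once it is in place, the lemma follows.
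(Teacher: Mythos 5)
Your overall strategy coincides with the paper's: lay out the arrow matrices as blocks of one large matrix $M$, conjugate it by a block-diagonal assembly of the per-vertex transition matrices of the form \eqref{fem}, and encode the resulting constraints (block triangularity, unitarity of the orthonormal part, identification of blocks belonging to the same vertex) via the equivalence \eqref{0}, the linear systems \eqref{00}, and the set $\mc U$. The paper carries this out explicitly for the example quiver \eqref{kshd}, after reordering each semiorthonormal basis so that the \emph{isotropic} part comes first (as in \eqref{ect}), which makes the transition matrices upper triangular as Definition \ref{d1.1} requires; note that your proposed reordering goes the wrong way, since putting the $m$-orthonormal part first, as in \eqref{fem}, yields lower triangular matrices.

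The genuine gap is this: you allot to each vertex exactly one pair of strips, and to each arrow the single block position determined by its source and target. This fails whenever $Q$ has parallel arrows or several loops at one vertex --- cases the lemma must cover, since quivers allow multiple edges, and the paper's own example has two arrows $\delta,\varepsilon\colon v\to w$ and two loops at $w$. Under your indexing the matrices $D$ and $E$ (and likewise $F$ and $G$) would have to occupy the \emph{same} block of $M$, which is impossible. The paper's remedy is to repeat a vertex's strips as many times as needed (its conjugating matrix is $S_u\oplus S_v\oplus S_w\oplus S_w$, with $S_w$ appearing twice) and then to force the repeated copies to be literally the same matrix: the equivalence $\approx$ identifies the corresponding diagonal positions ($5\approx 7$ and $6\approx 8$), and --- crucially --- an equation of the form $s_{56}=s_{78}$ in the system \eqref{3zz} identifies the off-diagonal entries of the two copies. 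Your construction has no mechanism for this second step: you use the linear conditions only to ``kill'' forbidden blocks, never to equate entries lying in different strips, so even if you did repeat strips, the two copies of $S_w$ could act with independent off-diagonal blocks $W_2$, and conjugation of $M$ would no longer reproduce exactly the transformations \eqref{ksd2}. Once the repetition-plus-identification device is added, your argument becomes the paper's proof.
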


\begin{proof}
For the sake of clarify, we prove the lemma for the quiver $Q$ in \eqref{kshd}. Let us choose  semiorthonormal bases in the spaces $U,V,W$ and interchange the basis vectors such that the matrices of scalar products are given by the matrices
\begin{equation}\label{ect}
H_u=\mat{0_{m_1}&0\\0&I_{m_2}},\quad
H_v=\mat{0_{n_1}&0\\0&I_{n_2}},\quad
H_w=\mat{0_{r_1}&0\\0&I_{r_2}}.
\end{equation}
Let
$A,B,\dots,G$ be the matrices of a semiunitary representation $\mc R$ from \eqref{kshd}.
The transition matrices that preserve \eqref{ect} have the form
\begin{equation*}\label{ect1}
S_u=\mat{U_1&U_2\\0&U_3},\quad
S_v=\mat{V_1&V_2\\0&V_3},\quad
S_w=\mat{W_1&W_2\\0&W_3},
\end{equation*}
in which $U_3,V_3,W_3$ are unitary matrices.
Let us construct by
$A,B,\dots,G$ the matrix
\[
M:=\mat{0_{m_1+m_2}&A&B&0
\\
0&C&0&0
\\
0&D&F&0
\\
0&E&G&0_{r_1+r_2}}.
\]

 If
$
S:=S_u\oplus S_v\oplus S_w\oplus S_w,
$
then
\begin{equation*}\label{wsr}
S^{-1}MS=
\mat{0&S_u^{-1}AS_v&S_u^{-1}BS_w&0
\\
0&S_v^{-1}CS_v&0&0
\\
0&S_w^{-1}DS_v&S_w^{-1}FS_w&0
\\
0&S_w^{-1}ES_v&S_w^{-1}GS_w&0};
\end{equation*}
therefore, the blocks $A,B,\dots,G$ are reduced as in \eqref{ksd2}.

The matrices of the form
\[
S=\mat{U_1&U_2\\0&U_3}\oplus
\mat{V_1&V_2\\0&V_3}\oplus
\mat{W_1&W_2\\0&W_3}\oplus
\mat{W_1&W_2\\0&W_3}
\]
compose the group $\mathscr G(\Gamma^{\un n\times\un n},\mc U )$, and so
\begin{itemize}
  \item $T=\{1,2,\dots,8\}$, in which $5\approx 7$ and $6\approx 8$;
\item the system of equations \eqref{3zz} consists of the equations \[s_{ij}=0,\qquad 1\le i<j\le 8,\quad(i,j)\notin
      \{(1,2),(3,4),(5,6),(7,8)\}\]
    and $s_{56}=s_{78}$;
\item  $\mc U=\{2,4,6,8\}$;
  \item $\un n:=(m_1,m_2,n_1,n_2,r_1,r_2,r_1,r_2)$.
\end{itemize}
\vskip-2em
\end{proof}

\subsection{An algorithm for reducing matrices to canonical form under $(\Lambda,\mc U)$-similarity}        \label{sec3}

In this section, we give an algorithm
for reducing an $\underline{n}\times \underline{n}$ matrix $M$
to a canonical form  $M_{\text{can}}$ under
$(\Lambda,U)$-similarity.

A brief sketch of the algorithm is as follows.
We start from a triple
$(M,\Lambda,\mc U)$ that  consists of an $\un n\times \un n$ matrix $M$, a reduced algebra $\Lambda$ of $\un n\times \un n$ matrices, and a set ${\cal U} \subset T$ closed under $\approx$.
\begin{itemize}
  \item First we order the blocks of $M$ as follows:
\begin{equation}      \label{5}
M_{t1}<M_{t2}<\dots<M_{tt}<M_{t-1,1}<
M_{t-1,2}<\dots<M_{t-1,t}<\cdots.
\end{equation}

  \item Take the first block $M_{pq}$ that is changed by admissible transformations and reduce it to its canonical form $M_{pq}'$, which we describe below.
  \item Restrict the set of admissible transformations to those that preserve $M_{pq}'$ and refine the partition in accordance with the partition of $M_{pq}'$.
\end{itemize}
We obtain a new triple $(M',\Lambda',\mc U')$, in which $M'$ is $(\Lambda,\mc U)$-similar to $M$ and $\Lambda'
\varsubsetneqq\Lambda$.
We repeat this construction until we obtain the sequence
\begin{equation}\label{ksj}
(M,\Lambda,\mc U),\quad
(M',\Lambda',\mc U'),\quad
(M'',\Lambda'',\mc U''),\ \dots,\ (M^{(k)},\Lambda^{(k)},\mc U^{(k)}),
\end{equation}
in which $k\ge 0$ and $M^{(k)}$ is $(\Lambda^{(k)},\mc U^{(k)})$-similar only to itself. The obtaining matrix $M_{\text{can}}:=M^{(k)}$ is the \emph{canonical form of $M$ under $(\Lambda,\mc U)$-similarity} since we prove in Theorem \ref{t5.2} that
\begin{equation*}\label{bth}
M \sim_{(\Lambda,\mc U)} M_{\text{can}}\, ,\qquad\quad  M \sim_{(\Lambda,\mc U)} N\quad \Longleftrightarrow\quad M_{\text{can}}=N_{\text{can}}\, .
\end{equation*}

Let us construct $(M',\Lambda',\mc U')$. We say that a block $M_{ij}$ of $M$ is  {\it stable} if it is not changed by $(\Lambda,U)$-similarity transformations.
Let $M_{ij}$  be stable.
\begin{itemize}
  \item If $i\approx j$, then $M_{ij}=a_{ij}I$ for some $a_{ij}\in\cc$
since $M_{ij}$ is not changed by transformations $S_{ii}^{-1}M_{ij}S_{jj}$, in which $S_{ii}=S_{jj}$ is an arbitrary nonsingular or unitary matrix.
  \item If $i\napprox j$, then $M_{ij}=0$ (we
put $a_{ij}=0$) since it is not changed by transformations $S_{ii}^{-1}M_{ij}S_{jj}$, in which $S_{ii}$ and $S_{jj}$ are arbitrary nonsingular or unitary matrices.
\end{itemize}

If all blocks of $M$ are stable,
then $M$ is invariant under
$(\Lambda,U)$-similarity; we put $M_{\text{can}}:=M$.
Suppose that not all blocks of $M$ are stable.

Let $M_{pq}$ be the first nonstable block with respect to the  ordering \eqref{5},
and let $p \in
{\cal P}$ and $q \in {\cal Q}$, where $\mc P, \mc Q\in T/\!\approx$ are
equivalence classes.
If $M':=S^{-1}MS$, in which
$S\in \mathscr G(\Lambda, U)$ has the form
(\ref{1}), then the $(p,q)$ block of
the matrix $MS=SM'$ is
\[
M_{p1}S_{1q}+M_{p2}S_{2q}+
\dots+M_{pq}S_{qq}
=S_{pp}M'_{pq}+S_{p,p+1}M'_{p+1,q}+
\dots+S_{pt}M'_{tq}.
\]
Since all  $M_{ij}<M_{pq}$ are stable, we have that
\begin{equation}      \label{6}
a_{p1}S_{1q}+\dots+a_{p,q-1}S_{q-1,q}+
M_{pq}S_{qq}=S_{pp}M'_{pq}+
S_{p,p+1}a_{p+1,q}+\dots+S_{pt}a_{tq},
\end{equation}
in which we remove all summands with
$a_{ij}=0$; their sizes may differ from the size
of $M_{pq}$.

In the following cases, we
reduce $M_{pq}$ to some simple form $M_{pq}'$ and restrict the set of $(\Lambda,U)$-similarity
transformations to those that preserve $M_{pq}'$. Thus, we take
\begin{equation}        \label{8'a}
\mathscr G(\Lambda', U'):=\{S\in\mathscr G(\Lambda, U)\,|\,(M'S)_{pq}= (SM')_{pq}\},
\end{equation}
where $(X)_{pq}$ denotes the $(p,q)$ block of $X$.

\begin{description}
\item[\it Case I:] \emph{ $(ME)_{pq}\ne (EM)_{pq}$ for some $E\in \Upsilon$} (see \eqref{omo}). This means that the
    $r_{\!_{\mc P \mc Q}}$ equalities
    \eqref{3} with $(\mc I, \mc J)=(\mc P, \mc Q)$ do not imply
\begin{equation}      \label{7}
   a_{p1}S_{1q}+a_{p2}S_{2q}+
   \dots+a_{p,q-1}S_{q-1,q}
   =S_{p,p+1}a_{p+1,q}+\dots+
   S_{pt}a_{tq}
\end{equation}
(see \eqref{6}).
Take $S \in
\mathscr G(\Lambda, U)$ of the form \eqref{1} with $(S_{11},S_{22},\dots,S_{tt})=(I,\dots,I)$ that satisfies  \eqref{6} with
$M'_{pq}=0$. We obtain $M'=S^{-1}MS$, in which $M'_{pq}=0$. The reduced algebra $\Lambda'$ defined in \eqref{8'a} consists of all
$S\in \Lambda$ satisfying \eqref{7} (we
add it to the system \eqref{3}), its decomposition \eqref{omo} has the form
$\Lambda'=\Delta\oplus\Upsilon'$ with the same $\Delta$.
We have $\un n'=\un n$, and $\mc U'=\mc U$.

\item[\it Case II:] \emph{$(ME)_{pq}= (EM)_{pq}$  for all $E\in \Upsilon$}. This means that the
    $r_{\!_{\mc P \mc Q}}$ equalities
    \eqref{3} with $(\mc I, \mc J)=(\mc P, \mc Q)$
imply \eqref{7}. Then \eqref{6}
    simplifies to $M_{pq}S_{qq}= S_{pp}M'_{pq}$; that is,
\begin{equation}      \label{8}
  M'_{pq}=S_{pp}^{-1} M_{pq}S_{qq}.
\end{equation}
In each of the following subcases, we reduce $M_{pq}$ to some simple form. In the sequel,  we use only those $S_{pp}$ and $S_{qq}$ that preserve this form. Thus,
\begin{equation*}\label{zjo}
\Lambda'=\Delta'\oplus\Upsilon,
\end{equation*}
in which $\Upsilon$ is the same as in \eqref{omo}.

\item[\it Subcase II(a):]
    {\it $p\napprox q$ and $p,q\notin \mc U$.} Then $S_{pp}$ and $S_{qq}$ in \eqref{8} are
arbitrary nonsingular matrices.
We
choose $S\in \mathscr G(\Lambda, U)$ such that
\begin{equation}\label{wses}
M'_{pq}= S_{pp}^{-1}M_{pq}S_{qq}=
      \mat{0&I\\0&0}.
\end{equation}
The algebra $\Lambda'$ consists of all
$S\in \Lambda$ for which
\begin{equation}\label{uhuh}
  S_{pp}\mat{0&I\\0&0} =\mat{0&I\\0&0}S_{qq};
\end{equation}
that is, for which $S_{pp}$ and $S_{qq}$ have the form
\begin{equation}\label{vgy}
S_{pp}= \mat{P_1&P_2\\0&P_3} ,\quad
S_{qq}= \mat{
Q_1& Q_2\\0& Q_3}, \qquad
P_1 = Q_3.
\end{equation}
We make the additional partition of the matrices of $\Lambda $ extending the partitions of $S_{pp}$ and $S_{qq}$, and rewrite the equalities
\eqref{2} and \eqref{3} for smaller blocks.

\item[\it Subcase II(b):]
    {\it $p\napprox q$, $p\in \mc U$, and $q\notin \mc U$.} Then $S_{pp}$ is unitary and $S_{qq}$ is nonsingular. We reduce $M_{pq}$ to the form \eqref{wses}. The equality \eqref{uhuh} with unitary $S_{pp}$ implies \eqref{vgy} with $P_2=0$ and unitary $P_1$ and $P_3$. Hence, $\Lambda'$ consists of all
$S\in \Lambda$, in which $S_{pp}$ and $S_{qq}$ are of the form \eqref{vgy} with $P_2=0$; $P_1$ and $P_3$ are unitary in the matrices of $\mathscr G(\Lambda', U')$.

\item[\it Subcase II(c):]
    {\it $p\napprox q$, $p\notin \mc U$, and $q\in \mc U$.} Then $S_{pp}$ is nonsingular and $S_{qq}$ is unitary. We reduce $M_{pq}$ to the form \eqref{wses}. The equality \eqref{uhuh} with unitary $S_{qq}$ implies \eqref{vgy} with $Q_2=0$ and unitary $Q_1$ and $Q_3$.

\item[\it Subcase II(d):]
    {\it $p\napprox q$ and $p,q\in \mc U$.} Then $S_{pp}$ and $S_{qq}$ are unitary. We reduce $M_{pq}$ by transformations \eqref{8} to the form
\begin{equation*}       \label{5.9a}
M_{pq}'=a_1I\oplus\dots\oplus a_kI\oplus 0\quad
\text{with real } a_1>\dots>a_{k}>0.
 \end{equation*}
If $S_{pp}M_{pq}'=M_{pq}'S_{qq}$ with unitary $S_{pp}$ and $S_{qq}$, then they have the form \[
S_{pp}=S_1\oplus\dots\oplus S_{k} \oplus P,\qquad S_{qq}=S_1\oplus\dots\oplus S_{k}\oplus Q,\] in which each $S_i$
has the same size as $a_iI$
(see \cite[Lemma 2.1(a)]{ser2}).

\item[\it Subcase II(e):]
{\it $p\approx q$ and $p,q\notin \mc U$.} Then $S_{pp}=S_{qq}$ is an arbitrary nonsingular matrix. We reduce $M_{pq}$  by transformations \eqref{8} to its Weyr canonical form under similarity
\[
M_{pq}'=
\mat{\lambda_1I&\matt{I\\0}&&0\\
&\lambda_1I&\ddots&\\
&&\ddots&\matt{I\\0}\\
0&&&\lambda_1I}\oplus\dots\oplus \mat{\lambda_kI&\matt{I\\0}&&0\\
&\lambda_kI&\ddots&\\
&&\ddots&\matt{I\\0}\\
0&&&\lambda_kI},
\]
in which $\lambda_1\prec\dots\prec \lambda_k$ with respect to the lexicographical ordering of complex numbers:
\begin{equation}\label{drc}
a+bi\prec c+di\qquad \text{if either $a<c$, or $a=c$ and $b<d$.}
\end{equation}
All commuting with $M_{pq}'$  matrices
form a reduced algebra with equations \eqref{00} of the
form $x_{ij}=x_{i'j'}$ and $x_{ij}=0$ (see \cite[Section 1.3]{ser_can}).

\item[\it Subcase II(f):]
{\it $p\approx q$ and $p,q\in \mc U$.} Then $S_{pp}=S_{qq}$ is a unitary matrix. We reduce $M_{pq}$ by transformations \eqref{8}  to the form
\begin{equation}         \label{5.10r}
M_{pq}'=\mat{\lambda_1I&F_{12}&
\dots&F_{1k}\\ &\lambda_2I &\ddots&\vdots \\
&&\ddots &F_{k-1,k} \\ 0&&&\lambda_kI},\ \ \
      \parbox{140pt}
{$\lambda_1
\preccurlyeq\dots\preccurlyeq\lambda_k$
(see \eqref{drc});\\ if $\lambda_i=\lambda_{i+1}$, then
the columns of $F_{i,i+1}$
are linearly  independent.
}
\end{equation}
Let $\widetilde M_{pq}'$ be obtained from $M_{pq}'$ by replacing all $F_{ij}$ with $\widetilde F_{ij}$ of the same sizes such that the columns of $\widetilde F_{i,i+1}$
are linearly  independent
if $\lambda_i=\lambda_{i+1}.$ The equality $S_{pp}M_{pq}'=\widetilde M_{pq}'S_{pp}$ implies that
$S_{pp}=S_1\oplus\dots\oplus
S_{k}$, in which each $S_i$ has the same size as $\lambda_iI$ (see \cite[Lemma 2.1(b)]{ser2}). The algebra $\Lambda'$ consists of those matrices $S\in\Lambda$, in which $S_{pp}$ has the form $S_1\oplus\dots\oplus
S_{k}$. Note that in this subcase we have reduced only the diagonal blocks $\lambda_1I,\dots,\lambda_kI$ and the zero blocks of $M_{pq}'$ under them; the blocks $F_{ij}$ are reduced in next steps of the algorithm.
\end{description}

\begin{theorem}           \label{t5.2}
Let $\Gamma \subset {\mathbb C}^{t\times t}$ be a basic reduced algebra, and let  ${\cal U} \subset \{1,\dots,t\}$ be closed under the equivalence $\approx$ from \eqref{0}. Let $\un n=(n_1,\dots,n_t)$ be a sequence of nonnegative integers such that $n_{i} = n_{j}$ if $i \approx
j$. Then each $\un n\times \un n$ complex matrix $M$ is $(\Gamma^{\un n\times \un n},\mc U)$-equivalent to the matrix $M_{\text{\rm can}}$ constructed by the algorithm. Two $\un n\times \un n$ complex matrices $M$ and $N$ are $(\Gamma^{\un n\times \un n},\mc U)$-equivalent if and only if $M_{\text{\rm can}}= N_{\text{\rm can}}$.
\end{theorem}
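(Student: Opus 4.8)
The plan is to prove the two assertions separately. The first---that $M$ is $(\Gamma^{\un n\times \un n},\mc U)$-similar to $M_{\text{can}}$ and that the algorithm halts---is almost immediate from the construction; the second---canonicity---is the substantive part. Write $(\Lambda,\mc U):=(\Gamma^{\un n\times\un n},\mc U)$. Each passage $(M^{(j)},\Lambda^{(j)},\mc U^{(j)})\mapsto(M^{(j+1)},\Lambda^{(j+1)},\mc U^{(j+1)})$ replaces $M^{(j)}$ by $S^{-1}M^{(j)}S$ for some $S\in\mathscr G(\Lambda^{(j)},\mc U^{(j)})$ and restricts the admissible group, so that $\mathscr G(\Lambda^{(j+1)},\mc U^{(j+1)})\subseteq\mathscr G(\Lambda^{(j)},\mc U^{(j)})$ by the definition \eqref{8'a}. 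Hence every $M^{(j)}$, and in particular $M_{\text{can}}$, is $(\Lambda,\mc U)$-similar to $M$. For termination I would observe that at each step the processed block $M_{pq}$ becomes stable while no earlier block in the ordering \eqref{5} ceases to be stable, since the transformations used fix all blocks preceding $M_{pq}$; thus the position of the first nonstable block advances strictly in the ordering \eqref{5}, and after at most $t^2$ steps every block is stable and the algorithm stops.

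For canonicity, the implication $M_{\text{can}}=N_{\text{can}}\Rightarrow M\sim_{(\Lambda,\mc U)}N$ is trivial from the first assertion. The converse I would prove by a synchronization induction on the step index $j$, showing simultaneously that the algorithm applied to $M$ and to $N$ produces the same algebra data $(\Lambda^{(j)},\mc U^{(j)})$, matrices $M^{(j)},N^{(j)}$ that agree in every block already made stable, and that $M^{(j)}\sim_{(\Lambda^{(j)},\mc U^{(j)})}N^{(j)}$. The base case $j=0$ is the hypothesis. For the inductive step, the key observation is that, since $M^{(j)}$ and $N^{(j)}$ carry the same stable blocks $a_{ij}$ and the same algebra, the position $(p,q)$ of the first nonstable block, the alternative Case I versus Case II, and the subcase within Case II are all determined by $(\Lambda^{(j)},\mc U^{(j)})$ and the $a_{ij}$ alone---whether \eqref{3} implies \eqref{7} depends only on the algebra coefficients and the stable entries, and the subcase is purely combinatorial in $\approx$ and $\mc U$. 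Hence both algorithms process the same block in the same way.

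The heart of the argument is then the claim that one such step preserves synchrony. Fix an intertwiner $T\in\mathscr G(\Lambda^{(j)},\mc U^{(j)})$ with $M^{(j)}T=TN^{(j)}$. Reading off the $(p,q)$ block and discarding the vanishing stable summands yields exactly the relation \eqref{6} with $M'_{pq}$ replaced by $N^{(j)}_{pq}$: in Case I both blocks are set to $0$, so $M^{(j+1)}_{pq}=0=N^{(j+1)}_{pq}$; in Case II it reduces to $N^{(j)}_{pq}=S_{pp}^{-1}M^{(j)}_{pq}S_{qq}$ as in \eqref{8}, so $M^{(j)}_{pq}$ and $N^{(j)}_{pq}$ lie in one orbit of the group acting in the relevant subcase, and reducing each to the unique canonical block form of that subcase---the Weyr form of II(e), the unitary normal form \eqref{5.10r}, the singular-value form \eqref{wses}, or the diagonal form $a_1I\oplus\dots\oplus a_kI\oplus0$ of II(d), whose uniqueness is \cite[Lemma 2.1]{ser2} and \cite[Section 1.3]{ser_can}---gives $M^{(j+1)}_{pq}=N^{(j+1)}_{pq}$. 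For the second half, the same $(p,q)$ block computation applied to the intertwiner $T'$ of $M^{(j+1)}$ and $N^{(j+1)}$ (the conjugate of $T$ by the two reduction matrices), now with $M^{(j+1)}_{pq}=N^{(j+1)}_{pq}$ in canonical form, is precisely the defining condition \eqref{8'a} of $\mathscr G(\Lambda^{(j+1)},\mc U^{(j+1)})$; thus $T'$ is automatically admissible, which shows both that $\Lambda^{(j+1)}$ is common to $M$ and $N$ (it is the stabilizer of the common canonical block) and that $M^{(j+1)}\sim_{(\Lambda^{(j+1)},\mc U^{(j+1)})}N^{(j+1)}$. Running the induction to termination, where every block is stable, forces $M^{(k)}=N^{(k)}$, i.e.\ $M_{\text{can}}=N_{\text{can}}$.

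The step I expect to be the main obstacle is the case-by-case verification that the $(p,q)$ block of the intertwining equation coincides with the defining relations of the refined algebra---\eqref{7} in Case I and the block shapes \eqref{vgy} in Case II---so that the intertwiner is admissible for the next step without further adjustment. This rests on the exact uniqueness of each block-level canonical form and on careful bookkeeping of which summands of \eqref{6} survive after the stable blocks are deleted; the unitary subcases II(b)--II(d) and II(f), where one must check that unitarity of $S_{pp}$ or $S_{qq}$ forces the off-diagonal blocks $P_2$ or $Q_2$ in \eqref{vgy} to vanish, are the most delicate points.
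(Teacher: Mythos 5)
Your overall strategy coincides with the paper's: both proofs run a synchronized induction along the steps of the algorithm, showing that two $(\Lambda,\mc U)$-similar matrices are processed identically (same first nonstable block, same case and subcase, same refined pair $(\Lambda',\mc U')$), that the reduced matrices remain similar under the refined group, and that at termination, similarity of a matrix all of whose blocks are stable forces equality. So structurally you are proving exactly what the paper proves, in somewhat more detail.

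However, one step fails as stated, precisely at the point you flag as delicate: Subcase II(f). There the form \eqref{5.10r} is \emph{not} a unique canonical form --- only the eigenvalues $\lambda_i$, their multiplicities, and the zero blocks below the diagonal are determined at this step; the blocks $F_{ij}$ are untouched and are reduced only later, after the partition is refined. Hence your claim that reducing each block to ``the unique canonical block form of that subcase'' gives $M^{(j+1)}_{pq}=N^{(j+1)}_{pq}$ is false in II(f), and with it your derivation that the conjugated intertwiner $T'$ satisfies the stabilizer condition \eqref{8'a}: in Case II one has $(M'T')_{pq}=(T'N')_{pq}$, and this coincides with $(T'M')_{pq}$ exactly when $N'_{pq}=M'_{pq}$, which need not hold here. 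The paper's proof avoids this by asserting only that $M'_{pq}$ and $N'_{pq}$ ``have the same form \eqref{5.10r} up to blocks $F_{ij}$,'' and by taking as the refined algebra in II(f) not the stabilizer \eqref{8'a} but the larger algebra of all $S\in\Lambda$ with $S_{pp}=S_1\oplus\dots\oplus S_k$ block diagonal; admissibility of the intertwiner then follows from \cite[Lemma 2.1(b)]{ser2} applied to $M'_{pq}T'_{pp}=T'_{pp}N'_{pq}$, not from equality of the blocks. Two smaller imprecisions: your termination bound of $t^2$ steps ignores that the partition (hence $t$) grows under refinement --- the paper instead gets termination from $\Lambda'\varsubsetneqq\Lambda$, so the dimension of the algebra strictly decreases; and the position of the first nonstable block is \emph{not} determined by the algebra and the stable entries alone (a single unconstrained diagonal block is stable iff it happens to be scalar) --- what is true, and what your own orbit computation actually delivers, is that $(\Lambda,\mc U)$-similar matrices have the same first nonstable position.
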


\begin{proof}
Write $\Lambda:=\Gamma^{\un n\times \un n}$. By the algorithm, \[M\sim_{(\Lambda,\mc U)} M'\sim_{(\Lambda',\mc U')}M''\sim_{(\Lambda'',\mc U'')}\cdots.\] Hence, $M\sim_{(\Lambda,\mc U)} M_{\text{can}}$.

Let $M\sim_{(\Lambda,\mc U)}N$. If $M_{pq}$ is the first nonstable block of $M$ with respect to the  ordering \eqref{5}, then $N_{pq}$ is the first nonstable block of $N$; the preceding blocks of $M$ and $N$ coincide.  If the equalities \eqref{3} do not imply \eqref{7}, then $M'_{pq}=N'_{pq}=0$. If they
imply \eqref{7}, then $M_{pq}S_{qq}= S_{pp}N_{pq}$, and so $M'_{pq}=N'_{pq}$ in each of Subcases II(a)--II(e). In Subcase II(f),
$M'_{pq}$ and $N'_{pq}$ have the same form \eqref{5.10r} up to blocks $F_{ij}$.
Hence, $M$ and $N$ define the same pair $(\Lambda',\mc U')$, and $M'\sim_{(\Lambda',\mc U')}N'$. We repeat this reasoning until we obtain
$M^{(k)}\sim_{(\Lambda^{(k)},\mc U^{(k)})}N^{(k)}$, where $k$ as in \eqref{ksj}. Since $M^{(k)}$ is $(\Lambda^{(k)},\mc U^{(k)})$-similar only to itself, we have $M^{(k)}=N^{(k)}$, and so
$M_{\text{can}}=N_{\text{can}}$.
\end{proof}

\section*{Acknowledgements}
The work was supported in part by the UAEU UPAR grant G00002160. V.V.~Sergeichuk was also supported by the FAPESP grant 2018/24089-4.

\end{document}